\newcommand{\vs}{\vspace}
\numberwithin{equation}{section} \theoremstyle{plain}
\newtheorem{theorem}{Theorem}[section]
\newtheorem{lemma}{Lemma}[section]
\newtheorem{proposition}{Proposition}[section]
\title{Ground state solutions to a coupled nonlinear logarithmic Hartree system}
\author{ Qihan  He \\
{\footnotesize  School of Mathematics and Information Science, Guangxi University, China}\\
 {\footnotesize  E-mail\,$:$ heqihan277@gxu.edu.cn } \vs{.3cm}\\
 Yafei Li \thanks {Corresponding author} \\
{\footnotesize  School of Mathematics and Information Science, Guangxi University, China}\\
 {\footnotesize E-mail\,$:$ yafeili0906@163.com} \vs{.3cm}\\
Yanfang Peng\\
{\footnotesize  School of Mathematical Sciences, Guizhou Normal University, China}\\  
{\footnotesize  E-mail\,$:$ pyfang2005@sina.com}\\}
\date{}
\begin{document}
 \maketitle 
\begin{abstract}
In this paper, we study the following coupled nonlinear logarithmic Hartree system
\begin{align*}
\left\{ \displaystyle \begin{array}{ll} \displaystyle
-\Delta u+ \lambda_1 u
=\mu_1\left( -\frac{1}{2\pi}\ln(|x|) \ast u^{2} \right)u+\beta \left( -\frac{1}{2\pi}\ln(|x|) \ast v^{2} \right)u,
& x \in \ \mathbb R^2,   \vspace{.4cm}\\ \displaystyle
-\Delta v+ \lambda_2 v
=\mu_2\left( -\frac{1}{2\pi}\ln(|x|) \ast v^{2} \right)v +\beta\left( -\frac{1}{2\pi}\ln(|x|) \ast u^{2} \right)v,
& x \in \ \mathbb R^2,
\end{array}
\right.\hspace{1cm}
\end{align*}
where $\beta, \mu_{i}, \lambda_{i} \ (i=1,2)$ are   positive constants, $\ast$ denotes  the convolution in $\mathbb R^{2}$. By considering the constraint  minimum  problem on the  Nehari manifold, we prove  the existence of  ground state solutions for   $\beta>0$ large enough. Moreover, we also show  that every positive solution  is radially symmetric and decays exponentially.
\end{abstract}

{\footnotesize {\bf   Keywords:} Hartree system, Logarithmic convolution potential, Ground state solution, Radial symmetry.

 \medskip
{\bf     MSC 2020:}  35A01;  35B09;  35J05;  35J47;  35J50}

\section{Introduction}

The time-dependent system of coupled nonlinear Hartree  system   can  be written  as follows:
\begin{align}\label{E:1.1}
\left\{ \displaystyle \begin{array}{ll} \displaystyle
-i\partial_{t} \mathbf{\Psi}_1
=\Delta \mathbf{\Psi}_1
+\mu_1\big(K(x)\ast|\mathbf{\Psi}_1|^{2}\big)\mathbf{\Psi}_1
+\beta\big(K(x)\ast|\mathbf{\Psi}_2|^{2}\big)\mathbf{\Psi}_1,
&(t,x)\in \mathbb R^+\times \mathbb R^N, \vspace{.2cm}\\
-i\partial_{t} \mathbf{\Psi}_2
=\Delta \mathbf{\Psi}_2
+\mu_2\big(K(x)\ast|\mathbf{\Psi}_2|^{2}\big)\mathbf{\Psi}_2
+\beta\big(K(x)\ast|\mathbf{\Psi}_1|^{2}\big)\mathbf{\Psi}_2,
&(t,x)\in \mathbb R^+\times \mathbb R^N,
\end{array}\right.
\end{align}
where $\mathbf{\Psi}_{j} : \mathbb R^+\times \mathbb R^N \to \mathbb C$, $i$ is the imaginary unit, $\mu_1,\mu_2\neq 0$, and $\beta\neq 0$ is a coupling constant  which describes the scattering length of the attractive or repulsive interaction, $K(x)$ is a response function which possesses information on the mutual interaction between the particles.
This system \eqref{E:1.1} appears in several physical models, for instance binary mixtures of Bose--Einstein condensates, or the propagation of mutually incoherent wave packets in nonlinear optics(see \cite{FS, EB, MM, VLK}).
And if ones want to know more about the physical background and mathematical derivation of Hartree's theory in the case of a single equation, we refer readers to \cite{ MPN,ML} and the references therein.

It is well-known that $(\mathbf{\Psi}_1(t,x),\mathbf{\Psi}_2(t,x)):=(e^{i\lambda_{1}t}u(x),e^{i\lambda_{2}t}v(x))$ is
a solitary wave solution of system \eqref{E:1.1} if and only if $(u,v)$ solve the following elliptic system
\begin{align}\label{E:1.2}
\left\{ \displaystyle\begin{array}{ll} \displaystyle
-\Delta u+ \lambda_1 u
=\mu_1\big(K(x)\ast u^{2}\big)u+\beta \big(K(x)\ast v^{2}\big)u ,
&\mbox{in}\ \mathbb R^N, \vspace{.2cm}\\ \displaystyle
-\Delta v+ \lambda_2 v
=\mu_2\big(K(x)\ast v^{2}\big)v+\beta \big(K(x)\ast u^{2}\big)v ,
&\mbox{in}\ \mathbb R^N.
\end{array}\right.
\end{align}
If the response function is the delta function, i.e. $K(x) =\delta(x)$,
then $\eqref{E:1.2}$ turns to the following coupled nonlinear Schr\"{o}dinger system
\begin{align}\label{E:1.3}
\left\{ \displaystyle\begin{array}{ll} \displaystyle
-\Delta u+ \lambda_1 u
=\mu_1 u^{3}+\beta  v^{2} u ,
&\mbox{in}\ \mathbb R^N,   \vspace{.2cm}\\
-\Delta v+ \lambda_2 v
=\mu_2  v^{3} +\beta u^{2} v ,
&\mbox{in}\ \mathbb R^N.
\end{array}
\right.\hspace{1cm}
\end{align}
For the system \eqref{E:1.3}, there are some significant progress on the multiplicity and properties of solutions,
see \cite{AC,BW,CLZ,CZ3,CZ2,LL,LWJ,LW,MZ,SB} and the references therein.

One can see that the fundamental solution to the laplace operator can be denoted as follows:
$$ \Gamma_N (x)=\left\{
\begin{aligned}
&- \frac{1}{2\pi}\ln(|x|), ~~~~~~~~~~~~  N=2;\\
&\frac{1}{N(N-2)w_{N}}|x|^{2-N}, ~~      N\geq 3,
\end{aligned}
\right.
$$
where $w_{N}$ is  the volume of the unit ball in $\mathbb R^N$.
If $K(x)=\Gamma_N (x)$ and  $N\geq3$, then  system \eqref{E:1.2} can be written as
\begin{align}\label{E:1.4}
\left\{ \displaystyle\begin{array}{ll} \displaystyle
-\Delta u+ \lambda_1 u
=\mu_1\big(\int_{\mathbb R^N} \frac{u^2(y)}{|x-y|^{N-2}} \mathrm{d}y \big)u+\beta \big(\int_{\mathbb R^N} \frac{v^2(y)}{|x-y|^{N-2}} \mathrm{d}y \big)u ,
&\mbox{in}\ \mathbb R^N, \vspace{.2cm}\\ \displaystyle
-\Delta v+ \lambda_2 v
=\mu_2\big(\int_{\mathbb R^N} \frac{v^2(y)}{|x-y|^{N-2}} \mathrm{d}y \big)v+\beta \big(\int_{\mathbb R^N} \frac{u^2(y)}{|x-y|^{N-2}} \mathrm{d}y \big)v ,
&\mbox{in}\ \mathbb R^N,
\end{array}
\right.\hspace{1cm}
\end{align}
which is a nonlocal problem and  has been  studied  extensively (See \cite{WS2,WS,YWD,YZW}).

If $K(x)=\Gamma_N (x)$ and  $N=2$, then  system \eqref{E:1.2} becomes the following problem
\begin{align}\label{E:1.5}
\left\{ \displaystyle\begin{array}{ll} \displaystyle
-\Delta u+ \lambda_1 u
=\mu_1\big(-\frac{1}{2\pi}\ln(|x|)\ast u^{2}\big)u+\beta \big(-\frac{1}{2\pi}\ln(|x|)\ast v^{2}\big)u ,
&\mbox{in}\ \mathbb R^2, \vspace{.2cm}\\ \displaystyle
-\Delta v+ \lambda_2 v
=\mu_2\big(-\frac{1}{2\pi}\ln(|x|)\ast v^{2}\big)v+\beta \big(-\frac{1}{2\pi}\ln(|x|)\ast u^{2}\big)v ,
&\mbox{in}\ \mathbb R^2.
\end{array}\right.
\end{align}
When $\beta=0$, studying \eqref{E:1.5} is equivalent to  studying  the following Schr\"{o}dinger--Poisson system
\begin{align}\label{E:1.6}
-\Delta u+ \lambda u
+\mu\Big(\int_{\mathbb R^{2}}\frac{1}{2\pi}\ln(|x-y|)u^{2}(y)\mathrm{d}y \Big)u=0,~~~~ \mbox{in}\ \mathbb R^2.
\end{align}
Since  the integral kernel $\ln(|x|)$ is sign--changing in $\mathbb R^2$,
system \eqref{E:1.6} attracts many researchers'  attention \cite{BM,BCV,CDZ,CT2,CT,CW,DW,LRTZ,SJ}.
To study system \eqref{E:1.6},
Stubble \cite{SJ}  first set up a variational framework  and  proved that if $\lambda\geq0$ and $\mu >0$,
then  the system \eqref{E:1.6} has a unique ground state solution, which is a positive spherically symmetric decreasing function.
Later, Bonheure, Cingolani and Van Schaftingen \cite{BCV} proved the nondegeneracy and  the exponential
decay property of the unique ground state solution to \eqref{E:1.6} with $\lambda>0,\mu=1$.
Cingolani and Weth \cite{CW} considered system \eqref{E:1.6} with a local nonlinear term, i.e.,
\begin{align}\label{E:1.7}
-\Delta u+ \lambda u
+\Big(\int_{\mathbb R^{2}}\frac{1}{2\pi}\ln(|x-y|)u^{2}(y)\mathrm{d}y \Big)u=b|u|^{p-2},~~~~ \mbox{in}\ \mathbb R^2,
\end{align}
where $b\geq0$, $p>2$ and $\lambda\in L^{\infty}(\mathbb R^2)$, and proved that  if $p\geq4$,
then  the problem \eqref{E:1.7} has a sequence of solution pairs $\pm u$ and a ground sate solution.
In addition, the authors also  showed that every positive solution is radially symmetric and monotone decreasing for $p>2$ and  $\lambda >0$ by moving plane method.
Later on, Du and Weth \cite{DW} studied the case of $2<p<4$ and $\lambda=1$,
and proved the existence of ground state solutions and infinitely many nontrivial sign--changing solutions.
In \cite{CT2,CT}, Chen and Tang considered a more general  case  related to \eqref{E:1.7} with axially symmetric potential function
and  general local nonlinearities,   and found a ground state solution in the axially symmetric functions space.
Recently,  Bernini and Mugnai \cite{BM} studied the existence of radially symmetric solutions for \eqref{E:1.6} with a local nonlinear  term,
which does not satisfy the Ambrosetti-Rabinowitz condition.

Motivated by the above mentioned  papers, here  we want to discuss the existence of positive ground stated solutions
and the properties of positive solutions to \eqref{E:1.5} with $ \beta, \lambda_{i}, \mu_{i}>0 (i=1,2)$.
The energy functional corresponding to \eqref{E:1.5} is defined by
\begin{align}\label{E:1.8}
\mathcal J(u,v)=\frac{1}{2}\int_{\mathbb R^2}\big(|\nabla u|^{2}+|\nabla v|^{2}+\lambda_1 u^{2}+ \lambda_2 v^{2}\big) \mathrm{d}x +\frac{1}{4}A_0(u,v),
\end{align}
where
$$ A_{0}(u,v):=\int_{\mathbb R^2}\int_{\mathbb R^2}\frac{1}{2\pi}\ln(|x-y|)\big(\mu_1 u^{2}(x)u^{2}(y) +\mu_2 v^{2}(x)v^{2}(y) + 2\beta u^{2}(x) v^{2}(y)\big)\mathrm{d}x\mathrm{d}y. $$
Note that $\mathcal J$ is not well-defined on $H^1 (\mathbb R^2)\times H^1 (\mathbb R^2)$ even if $ \beta, \lambda_{i},  \mu_{i}>0 (i=1,2)$.
Inspired by \cite{CW} and \cite{SJ}, we define a smaller Hilbert space
\begin{align}\label{E:1.9}
X:=\bigg\{(u,v) \in H:\int_{\mathbb R^{2}} \big( \ln(1+|x|)u^2+\ln(1+|x|)v^2 \big) \mathrm{d}x < \infty \bigg\},
\end{align}
equipped with the norm
$$\displaystyle \|(u,v)\|_{X}^{2}:=\int_{\mathbb R^{2}} \big(|\nabla u|^2+|\nabla v|^2+\lambda_1 u^2+ \lambda_2 v^2 \big)\mathrm{d}x + \int_{\mathbb R^2}\big( \ln(1+|x|)u^2 + \ln(1+|x|)v^2 \big) \mathrm{d}x,$$
where $H:=H^{1}(\mathbb R^{2})\times H^{1}(\mathbb R^{2})$, endowed with the norm
$$\|(u,v)\|_{H}^{2}:=\int_{\mathbb R^2} \big(|\nabla u|^2+|\nabla v|^2+\lambda_1 u^2+ \lambda_2 v^2 \big)\mathrm{d}x.$$
Due to the  Hardy-Littlewood-Sobolev inequality and the following decomposition
$$\ln(|x-y|)=\ln(1+|x-y|)-\ln(1+\frac{1}{|x-y|}),$$
we have that $\mathcal J$ is well-defined and of class $C^{1}$ on $X$.
Moreover, any critical point of   $\mathcal J $ in $X$ corresponds to a solution of \eqref{E:1.5}.

Before stating our result, we give some definitions.
A solution $(u,v)$ of \eqref{E:1.5} is called a nontrivial solution if $u\neq 0$ and $ v\neq 0$,
and a nontrivial solution $(u,v)$ is positive if $u>0,v>0$.
Moreover, we say a solution $(u,v)$ of \eqref{E:1.5} is a ground sate solution
if $(u,v)$ is nontrivial and $\mathcal J(u,v)\leq \mathcal J(\phi,\psi)$ for any other nontrivial solution $(\phi,\psi)$ of \eqref{E:1.5}.

Our first result can be stated as follows:
\begin{theorem}\label{th:1.1}
Assume that $\beta, \lambda_i, \mu_i > 0 (i=1,2)$.  Then every positive solution $(u,v)\in X$ of \eqref{E:1.5}
is radially symmetric and monotone decreasing. In particular, $u$ and $v$ decrease exponentially.
\end{theorem}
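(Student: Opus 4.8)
The plan is to prove radial symmetry and monotonicity via the method of moving planes, following the approach indicated in the excerpt (the paper cites Cingolani--Weth \cite{CW}, who used moving planes for the scalar case). The key structural feature that makes moving planes applicable here is that the convolution kernel $-\frac{1}{2\pi}\ln(|x|)$ produces nonlocal potentials $V_u(x) := -\frac{1}{2\pi}\ln(|x|)\ast u^2$ and $V_v(x) := -\frac{1}{2\pi}\ln(|x|)\ast v^2$, and these potentials inherit good reflection-comparison properties from the kernel: because $\ln(|x-y|)$ is increasing in $|x-y|$, reflecting across a hyperplane and comparing $|x-y|$ with $|x^\lambda - y|$ gives sign control on the difference of the potentials. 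First I would establish the exponential decay, which is actually the natural starting point: since any positive solution lies in $X$ and decays at infinity, one shows that $V_u(x) + \lambda_1 \to +\infty$ (the logarithmic potential grows like $\frac{M}{2\pi}\ln|x|$ where $M = \int u^2$), so the linear coefficient in each equation becomes large and positive at infinity; a standard comparison/barrier argument with an exponentially decaying supersolution then yields $u(x), v(x) \le C e^{-\delta|x|}$.

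Next I would set up the moving planes apparatus. For a direction, say $e_1$, and a real parameter $\lambda$, write $x^\lambda = (2\lambda - x_1, x_2)$ for the reflection across $T_\lambda = \{x_1 = \lambda\}$, let $\Sigma_\lambda = \{x_1 > \lambda\}$, and set $u_\lambda(x) = u(x^\lambda)$, $v_\lambda(x) = v(x^\lambda)$, with $w_\lambda = u_\lambda - u$ and $z_\lambda = v_\lambda - v$. The system for the differences $(w_\lambda, z_\lambda)$ on $\Sigma_\lambda$ takes the form
\begin{align*}
-\Delta w_\lambda + \lambda_1 w_\lambda &= \mu_1\big(V_u^\lambda u_\lambda - V_u u\big) + \beta\big(V_v^\lambda u_\lambda - V_v u\big),\\
-\Delta z_\lambda + \lambda_2 z_\lambda &= \mu_2\big(V_v^\lambda v_\lambda - V_v v\big) + \beta\big(V_u^\lambda u_\lambda - V_u u\big),
\end{align*}
where $V_u^\lambda(x) = V_u(x^\lambda)$, etc. The crucial point is to rewrite each right-hand side so as to separate a \emph{good} sign-definite piece coming from the kernel's monotonicity from the \emph{linearized} pieces proportional to $w_\lambda$ and $z_\lambda$ themselves. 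Using the reflection identity $V_u^\lambda(x) - V_u(x) = -\frac{1}{2\pi}\int_{\Sigma_\lambda}\big(\ln|x^\lambda - y| - \ln|x - y|\big)(u^2 - u_\lambda^2)\,\mathrm{d}y$ and the fact that for $x, y \in \Sigma_\lambda$ one has $|x - y| \le |x^\lambda - y|$, one controls the sign of the kernel-difference terms, reducing the problem to a cooperative-type system inequality for $(w_\lambda, z_\lambda)^-$ (the negative parts).

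The engine of the argument is then the standard two-stage moving-plane scheme. \textbf{Start:} for $\lambda$ very negative, exponential decay forces $w_\lambda, z_\lambda \ge 0$ on $\Sigma_\lambda$ (there is almost no mass of $u, v$ far out, so the reflected solution dominates); this is where I multiply by $w_\lambda^- = \min(w_\lambda, 0)$ and $z_\lambda^-$, integrate, and use the smallness of the $L^2$-mass on $\Sigma_\lambda$ together with the Sobolev/logarithmic estimates to absorb the bad terms, concluding $w_\lambda^- = z_\lambda^- = 0$. \textbf{Continuation:} defining $\lambda_0 = \sup\{\lambda : w_\mu, z_\mu \ge 0 \text{ on } \Sigma_\mu \ \forall \mu \le \lambda\}$, I show by a strong-maximum-principle / Hopf-lemma argument applied to the cooperative system that $\lambda_0$ cannot be finite unless $u, v$ are symmetric about $T_{\lambda_0}$; a limiting and contradiction argument (if $\lambda_0 < \infty$ and no symmetry, one could push the plane slightly further) forces symmetry in the $e_1$-direction. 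Since $e_1$ was arbitrary and the nonlinearity is invariant under rotations, $u$ and $v$ are radially symmetric about a common center, and monotone decreasing follows from the Hopf-lemma sign of the normal derivative $\partial_{x_1} u < 0$ on $T_{\lambda_0}$.

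\textbf{The main obstacle} will be handling the nonlocal, sign-changing logarithmic kernel in the comparison step. Unlike a pointwise nonlinearity, the terms $V_u^\lambda u_\lambda - V_u u$ couple values of $w_\lambda, z_\lambda$ over \emph{all} of $\Sigma_\lambda$ through the convolution, so the maximum principle does not apply verbatim; I must show that the ``nonlocal'' contribution has a favorable sign (using $|x-y| \le |x^\lambda - y|$ for $x,y \in \Sigma_\lambda$, which makes $\ln|x^\lambda-y| - \ln|x-y| \ge 0$) so that it either helps or can be absorbed, rather than sabotaging the inequality. A secondary technical difficulty is that the logarithm is unbounded and sign-changing, so the integrals defining $V_u, V_v$ must be split via $\ln(1+t) - \ln(1+1/t)$ (exactly the decomposition introduced before \eqref{E:1.9}) and each piece estimated separately using the exponential decay to guarantee convergence and the required sign. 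Once these nonlocal sign issues are resolved, the remainder of the moving-plane machinery is by now classical.
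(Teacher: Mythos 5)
Your proposal follows essentially the same route as the paper's own proof: a Cingolani--Weth style moving-plane argument in which the difference of reflected logarithmic potentials is rewritten as an integral over the half-space against the nonnegative kernel $\ln\bigl(|x-y^t|/|x-y|\bigr)$ (the paper's Lemma \ref{lm:4.1}), the negative parts of the differences are tested in $L^2$ and absorbed using smallness coming from exponential decay, and symmetry about a common center plus Hopf's lemma give radial monotonicity; the decay itself comes from the effective linear coefficient blowing up at infinity, as in the paper's Lemma \ref{lm:2.5}. Two sign slips in your write-up should be fixed, though neither touches the substance. First, with your convention $\Sigma_\lambda=\{x_1>\lambda\}$, the start step must be run at $\lambda\to+\infty$ (as in Lemma \ref{lm:4.3}), not $\lambda\to-\infty$: for very negative $\lambda$ the half-space $\Sigma_\lambda$ contains the bulk of $u,v$, while $u_\lambda,v_\lambda$ are values of $u,v$ taken far to the left, so $w_\lambda\approx-u<0$ near the maximum of $u$ and the claimed inequality fails; equivalently, keep your starting point and your definition of $\lambda_0$ as a supremum, but define $\Sigma_\lambda=\{x_1<\lambda\}$. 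Second, $V_u=-\frac{1}{2\pi}\ln(|x|)\ast u^2$ behaves like $-\frac{M}{2\pi}\ln|x|\to-\infty$ (see \eqref{E:2.9}), so what tends to $+\infty$ is $\lambda_1-\mu_1V_u-\beta V_v$, not $V_u+\lambda_1$; with the asymptotics as you wrote them the linear coefficient would tend to $-\infty$ and the barrier argument would collapse.

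One more point worth making explicit: the ``absorption'' you invoke in the start step is exactly the paper's Lemma \ref{lm:4.2} (Lemma 6.2 of \cite{CW}), namely $\|w_{u,t}^-\|_{L^2(H_t)}\le k\,c_{u,t}\|u_t^-\|_{L^2(H_t)}$ with $c_{u,t}$ a second moment of $u$ over the set where $u_t<0$; this quantitative bound, not just the sign of the kernel difference, is what replaces the maximum principle for the nonlocal term, and your plan should state it (or prove an analogue). Relatedly, the paper's continuation step (Lemma \ref{lm:4.5}) is not the purely qualitative strong-maximum-principle-plus-compactness argument you describe: it reuses the same quantitative $L^2$ estimate outside a large ball $B_R$, with strict positivity inside $B_R$ supplied by Lemma \ref{lm:4.4} and continuity; a qualitative continuation would be delicate here precisely because the coupling is nonlocal.
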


We note that Wang and Shi \cite{WS} showed  the radial symmetry and 
the monotonic decreasing of positive solutions to  \eqref{E:1.4} for  the case $N=3$ and $\beta, \lambda_i, \mu_i > 0 (i=1,2)$.
Their approach relies on the moving plane method  of the integral form.
The methods of \cite{WS} can  apply to a general class of integral equations,
but they  can not be applied  to \eqref{E:1.5} since  $\Gamma_{2}(x) = -\frac{1}{2\pi}\ln(|x|)$ is sign--changing.
Inspired by \cite{CW}, we use a more direct and simpler variant of the moving plane method to prove Theorem \ref{th:1.1}.

To obtain a positive ground state solution of \eqref{E:1.5}, we define
\begin{align}\label{E:1.10}
\mathcal{N} : = \big\{(u,v)\in X \backslash \{(0,0)\} : N(u, v):=\big\langle \mathcal J^{\prime}(u,v),(u,v)\big\rangle=0 \big\},
\end{align}
\begin{align}\label{E:1.11}
c:=\inf\limits_{(u,v)\in\mathcal N} \mathcal J(u,v),
\end{align}
and
\begin{align}\label{E:1.12}
\beta_1 :=\frac{\mu_1 \big( \|\nabla u_1 \|_{2}^{2}+\lambda_{2} \|u_1\|_{2}^{2} \big) }{  \|\nabla u_1\|_{2}^{2}+\lambda_1 \|u_1\|_{2}^{2} },~~~ \beta_2 :=\frac{\mu_2 \big( \|\nabla u_2 \|_{2}^{2}+\lambda_{1}\|u_2\|_{2}^{2} \big) }{  \|\nabla u_2\|_{2}^{2}+\lambda_2 \|u_2\|_{2}^{2} },
\end{align}
where
$\lambda_i,\mu_i >0 (i=1,2)$ and $u_i$ is the unique ground sate solution of \eqref{E:1.6} with $(\lambda,\mu)=(\lambda_i,\mu_i)\ (i=1,2)$.

Our results on  the existence of positive ground state solutions to \eqref{E:1.5} are stated in the following Theorem.

\begin{theorem}\label{th:1.2}
Assume that $\beta, \lambda_i, \mu_i > 0 (i=1,2)$. If $\beta>\max{\{ \beta_{1},\beta_{2}\}}$,
then system \eqref{E:1.5} has a positive ground state solution $(u_0,v_0)$ in $X$,
where $\beta_{1}$ and $\beta_{2}$ are defined in \eqref{E:1.12}.
\end{theorem}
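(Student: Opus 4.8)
The plan is to obtain the ground state as a minimizer of $\mathcal{J}$ on the Nehari manifold $\mathcal{N}$, and then to use the hypothesis $\beta>\max\{\beta_1,\beta_2\}$ to force both components of the minimizer to be nonzero. First I would exploit the homogeneity structure: since $\|(u,v)\|_H^2$ is $2$-homogeneous and $A_0$ is $4$-homogeneous, the fiber map $t\mapsto\mathcal{J}(tu,tv)=\frac{t^2}{2}\|(u,v)\|_H^2+\frac{t^4}{4}A_0(u,v)$ has, for each $(u,v)$ with $A_0(u,v)<0$, a unique positive maximum at some $t_*$, and $(t_*u,t_*v)\in\mathcal{N}$. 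Writing $\mathcal{J}=\mathcal{J}-\frac14 N$ shows $\mathcal{J}(u,v)=\frac14\|(u,v)\|_H^2$ on $\mathcal{N}$, so $c=\frac14\inf_{\mathcal{N}}\|(u,v)\|_H^2\ge0$. To get $c>0$ I would use the decomposition $\ln(|x-y|)=\ln(1+|x-y|)-\ln(1+\tfrac{1}{|x-y|})$: on $\mathcal{N}$ one has $\|(u,v)\|_H^2=-A_0(u,v)$, and since the $\ln(1+|x-y|)$ part of $A_0$ is nonnegative, $\|(u,v)\|_H^2$ is dominated by the subcritical $\ln(1+\tfrac{1}{|x-y|})$ part, which the Hardy--Littlewood--Sobolev inequality bounds by $C\|(u,v)\|_H^4$; hence $\|(u,v)\|_H^2\ge C^{-1}>0$ on $\mathcal{N}$ and $c>0$.

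Next I would take a minimizing sequence $(u_n,v_n)\in\mathcal{N}$, $\mathcal{J}(u_n,v_n)\to c$; boundedness in $H$ is immediate from $\mathcal{J}=\frac14\|\cdot\|_H^2$. The step I expect to be the main obstacle is upgrading this to boundedness in the weighted space $X$: because the kernel is sign-changing, one must combine the Nehari identity with the positivity of the bilinear form attached to $\ln(1+|x-y|)$ and the fine logarithmic estimates of Cingolani--Weth and Stubbe to control $\int_{\mathbb{R}^2}\ln(1+|x|)(u_n^2+v_n^2)\,\mathrm{d}x$. Granting boundedness in $X$, I would invoke the compact embedding $X\hookrightarrow L^p(\mathbb{R}^2)$ for $2\le p<\infty$, pass to a subsequence with $(u_n,v_n)\rightharpoonup(u_0,v_0)$ in $X$ and strongly in each $L^p$, and use the HLS inequality with the logarithmic decomposition to conclude $A_0(u_n,v_n)\to A_0(u_0,v_0)$. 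Weak lower semicontinuity of $\|\cdot\|_H$ then yields $\|(u_0,v_0)\|_H^2\le -A_0(u_0,v_0)$, i.e. $N(u_0,v_0)\le0$; here $(u_0,v_0)\ne(0,0)$, since otherwise $A_0(u_n,v_n)\to0$ would force $\|(u_n,v_n)\|_H\to0$ and $c=0$. If $N(u_0,v_0)<0$ then the fiber maximum $t_*\in(0,1)$ would place $(t_*u_0,t_*v_0)\in\mathcal{N}$ with $\mathcal{J}=\frac14 t_*^2\|(u_0,v_0)\|_H^2<c$, impossible; hence $(u_0,v_0)\in\mathcal{N}$ and $\mathcal{J}(u_0,v_0)=c$. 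The Lagrange relation $\mathcal{J}'(u_0,v_0)=\theta N'(u_0,v_0)$, paired with $(u_0,v_0)$ and using $\langle N'(u_0,v_0),(u_0,v_0)\rangle=2A_0(u_0,v_0)=-2\|(u_0,v_0)\|_H^2\ne0$, gives $\theta=0$, so $(u_0,v_0)$ solves \eqref{E:1.5}.

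It remains to exclude a semitrivial minimizer, and this is where the threshold enters. The pairs $(u_1,0)$ and $(0,u_2)$ lie on $\mathcal{N}$ with $\mathcal{J}(u_1,0)=\frac14(\|\nabla u_1\|_2^2+\lambda_1\|u_1\|_2^2)$. Testing with the family $(su_1,tu_1)$ and using the equation for $u_1$ to replace $\frac{1}{2\pi}\iint\ln(|x-y|)u_1^2(x)u_1^2(y)\,\mathrm{d}x\,\mathrm{d}y$ by $-(\|\nabla u_1\|_2^2+\lambda_1\|u_1\|_2^2)/\mu_1$, a short computation along the curve $N(su_1,tu_1)=0$ near $(s,t)=(1,0)$ shows the energy $\mathcal{J}(su_1,tu_1)=\frac14\|(su_1,tu_1)\|_H^2$ strictly decreases in the direction $t>0$ exactly when $\beta>\beta_1$. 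Thus $c<\mathcal{J}(u_1,0)$, and symmetrically $c<\mathcal{J}(0,u_2)$. Consequently, if the minimizer had $v_0\equiv0$, then $u_0$ would solve \eqref{E:1.6} with $(\lambda_1,\mu_1)$, so $c=\mathcal{J}(u_0,0)\ge\mathcal{J}(u_1,0)$ by minimality of the single-equation ground state $u_1$, contradicting $c<\mathcal{J}(u_1,0)$; the case $u_0\equiv0$ is ruled out the same way. Hence $u_0\not\equiv0$ and $v_0\not\equiv0$.

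To finish, since $\|\cdot\|_H$ and $A_0$ depend only on $u^2$ and $v^2$, the pair $(|u_0|,|v_0|)$ is again a minimizer, hence a solution; writing each equation as $-\Delta u_0+\lambda_1 u_0=a(x)u_0$ with $a$ locally bounded and applying the strong maximum principle to the nonnegative, nontrivial components gives $u_0>0$ and $v_0>0$. Since every nontrivial solution of \eqref{E:1.5} belongs to $\mathcal{N}$, we have $\mathcal{J}(u_0,v_0)=c\le\mathcal{J}(\phi,\psi)$ for every nontrivial solution $(\phi,\psi)$, so $(u_0,v_0)$ is a positive ground state solution in $X$. The part demanding the most care remains the compactness in the second paragraph, namely the $X$-boundedness of the minimizing sequence and the continuity of $A_0$ under weak convergence in the presence of the sign-changing logarithmic kernel, for which the weighted compact embedding and the logarithmic bilinear estimates are indispensable.
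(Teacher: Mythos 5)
Your proposal is correct and follows essentially the same route as the paper's proof: Nehari-manifold minimization with $c>0$ obtained from the Hardy--Littlewood--Sobolev bound on the $\ln\bigl(1+\frac{1}{|x-y|}\bigr)$ part, $X$-boundedness of the minimizing sequence via the Cingolani--Weth logarithmic estimate (the paper's Lemma \ref{lm:2.2}), weak lower semicontinuity plus a projection argument to place the weak limit on $\mathcal{N}$, the natural-constraint/Lagrange-multiplier step, exclusion of semitrivial minimizers by perturbing $(u_1,0)$ along the family $(tu_1,t\rho u_1)$ under $\beta>\beta_1$ (exactly the paper's Lemma \ref{lm:2.8}), and the strong maximum principle for positivity. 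Two minor caveats, neither fatal: your claim $A_0(u_n,v_n)\to A_0(u_0,v_0)$ is stronger than what is true --- the $\ln(1+|x-y|)$ part $A_1$ is only weakly lower semicontinuous under weak $X$-convergence (a small bump drifting to infinity with $\|\cdot\|_{\ast}$ of order one breaks continuity) --- but the inequality you actually need, $N(u_0,v_0)\le 0$, follows from that semicontinuity together with the compact $L^{8/3}$ convergence controlling $A_2$, which is how the paper argues; and your projection onto $\mathcal{N}$ by plain scaling $t(u,v)$ exists only when $A_0(u,v)<0$, which does hold at the one point where you use it (since $N(u_0,v_0)<0$ forces $A_0(u_0,v_0)<0$), whereas the paper's scaling $\bigl(t^2u(tx),t^2v(tx)\bigr)$ produces an extra $-t^4\ln t$ term and therefore projects every nonzero pair onto $\mathcal{N}$.
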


To obtain a ground sate solution of \eqref{E:1.5}, we need to overcome some difficulties. First, since the integral kernel $\Gamma_{2}(x) = -\frac{1}{2\pi}\ln(|x|)$ is sign--changing, we decompose $\ln(|x|)$ into $\ln(1+|x|)$ and $-\ln(1+\frac{1}{|x|})$.  Then, to make the corresponding functional sense, we introduce a new smaller working Space $X$, which is defined by \eqref{E:1.9}. Finally, by studying the constraint minimum  problem $c=\inf\limits_{(u,v)\in\mathcal N} \mathcal J(u,v)$ on the Nehari manifold restricted to $X$, we obtain the existence of positive ground state solutions of \eqref{E:1.5}.  Moreover, it is worth noticing that we also need to prove that $\mathcal N \in C^1$ is a natural constraint and eliminate the semi--trivial solutions in the processing of proving Theorem \ref{th:1.2}.

The paper is organized as follows.
The variational setting and preliminaries will be given in Section \ref{sc2}.
Section \ref{sc3} will devote to  the proof of Theorem \ref{th:1.2}.
In Section \ref{sc4}, we will complete the proof of Theorem \ref{th:1.1}.

\section{Variation framework and preliminaries}\label{sc2}
For convenience, we introduce  the following notations.

$\bullet$ \ A new  Hilbert space
$$X_1:=\bigg\{u \in H^{1}(\mathbb R^{2}):\int_{\mathbb R ^{2}} \ln(1+|x|)u^2 \mathrm{d}x < \infty \bigg\}$$
equipped with the norm
$$\displaystyle\|u\|_{X_1}^{2}:=\int_{\mathbb R^{2}} \big(|\nabla u|^{2}+u^{2}\big)\mathrm{d}x + \int_{\mathbb R^{2}}\ln(1+|x|)u^2 \mathrm{d}x;$$

$\bullet$ \ The standard norm in $L^{p}(\mathbb R^{2}) \ (1\leq p<\infty)$ is denoted  by $\displaystyle \|u\|_{p}:=\left( \int_{\mathbb R^2}|u|^{p} \mathrm{d}x \right)^{\frac{1}{p}}$;

$\bullet$ \ $L^{p}(\mathbb R^{2})\times L^{p}(\mathbb R^{2}) \ (1\leq p<\infty)$ denotes the Lebesgue space with the norm
$$\|(u,v)\|_{p}:=\left( \|u\|_p^p+ \|v\|_p^p\right)^{\frac{1}{p}} ;$$

$\bullet$ \ For any $u\in H^1(\mathbb R^2)$, $\displaystyle\|u\|_{\ast}^2:=\int_{ \mathbb R^{2}} \ln(1+|x|)u^2 \mathrm{d}x$;

$\bullet$ \ $C,\ C_{1},\ C_{2},\cdots$ stand for positive constants possibly different in different places.

We define the following symmetric bilinear forms
\begin{align}\label{E:2.1}
\aligned
(u,v)&\mapsto I_{1}(u,v):=\int_{\mathbb R^2}\int_{\mathbb R^2}\frac{1}{2\pi} \ln(1+{|x-y|}) u(x)v(y) \mathrm{d}x\mathrm{d}y ; \vspace{.4cm}\\
(u,v)&\mapsto I_{2}(u,v):=\int_{\mathbb R^2}\int_{\mathbb R^2}\frac{1}{2\pi} \ln(1+\frac{1}{|x-y|}) u(x)v(y) \mathrm{d}x\mathrm{d}y; \vspace{.4cm}\\
(u,v)&\mapsto I_{0}(u,v):=I_{1}(u,v)-I_{2}(u,v)=\int_{\mathbb R^2}\int_{\mathbb R^2}\frac{1}{2\pi}\ln(|x-y|)u(x)v(y)\mathrm{d}x\mathrm{d}y; \vspace{.4cm}\\
(u,v)&\mapsto B_{i}(u,v):=\mu_{1}I_{i}(u,u)+\mu_{2}I_{i}(v,v)+2\beta I_{i}(u,v),\ \ i=0,1,2.
\endaligned
\end{align}
Since $0\leq \ln(1+r)\leq r$ for $r \geq 0$,  by  Hardy-Littlewood-Sobolev inequality (See Theorem 4.3 of \cite{LB}), we have that
\begin{align}\label{E:2.2}
I_{2}(u,v) \leq \frac{1}{2\pi}  \int_{\mathbb R^2}\int_{\mathbb R^2} \frac{1}{|x-y|} u(x) v(y) \mathrm{d}x\mathrm{d}y
           \leq \mathcal C_{0} \|u\|_{\frac{4}{3}}\|v\|_{\frac{4}{3}},\ \ \forall u,v \in L^{\frac{4}{3}}(\mathbb R^{2}),
\end{align}
with a constant $\mathcal C_{0}>0$. Also, we  define the functionals:
\begin{align}\label{E:2.3}
\aligned
A_{1}&:H^{1}(\mathbb R^{2})\times H^{1}(\mathbb R^{2}) \to [0,\infty], \hspace{1.6cm}  A_{1}(u,v):=B_{1}(u^{2},v^{2}) ; \vspace{.4cm}\\
A_{2}&:L^{\frac{8}{3}}(\mathbb R^{2})\times L^{\frac{8}{3}}(\mathbb R^{2}) \to [0,\infty), \hspace{1.6cm}  A_{2}(u,v):=B_{2}(u^{2},v^{2}); \vspace{.4cm}\\
A_{0}&:H^{1}(\mathbb R^{2})\times H^{1}(\mathbb R^{2}) \to \mathbb R\cup \{\infty\}, \hspace{1.2cm}  A_{0}(u,v):=B_{0}(u^{2},v^{2}).\vspace{.4cm}\\
\endaligned
\end{align}
From \eqref{E:2.2}, we deduce that
\begin{align}\label{E:2.4}
A_{2}(u,v)&=\mu_1 I_2(u^2 , u^2)+ \mu_2 I_2(v^2 , v^2)+ 2 \beta I_2 (u^2 , v^2) \vspace{.1cm} \nonumber\\
          &\leq \mathcal C_{0}(\mu_{1}+\beta) \|u\|^{4}_{\frac{8}{3}}+\mathcal C_{0}(\mu_{2}+\beta)\|v\|^{4}_{\frac{8}{3}},
            \hspace{0.6cm}   \forall  (u,v)\in L^{\frac{8}{3}}(\mathbb R^{2})\times L^{\frac{8}{3}}(\mathbb R^{2}).
\end{align}

Since
$$\displaystyle \ln(1+|x-y|)\leq \ln(1+|x|+|y|)\leq\ln(1+|x|)+\ln(1+|y|), \ \ \forall x,y\in\mathbb R^{2},$$
one has
\begin{align}\label{E:2.5}
I_{1}(uv,\phi\psi)
&\leq \int_{\mathbb R^2}\int_{\mathbb R^2}\frac{1}{2\pi} \big(\ln(1+|x|)+\ln(1+|y|)\big) |u(x)v(x)| |\phi(y)\psi(y)| \mathrm{d}x\mathrm{d}y \vspace{.1cm} \nonumber\\
&\leq \frac{1}{2\pi} \big(\|u\|_{\ast} \|v\|_{\ast} \|\phi\|_{2} \|\psi\|_{2} + \|u\|_{2} \|v\|_{2} \|\phi\|_{\ast} \|\psi\|_{\ast} \big),
                  ~~~~ \forall (u,\phi), (v,\psi) \in X,
\end{align}
which implies  that, for any $(u,v) \in X$,
\begin{align}\label{E:2.6}
A_{1}(u,v)&=\mu_1 I_1(u^2 , u^2)+ \mu_2 I_1(v^2 , v^2)+ 2 \beta I_1 (u^2 , v^2) \vspace{.1cm} \nonumber\\
&\leq \frac{1}{ \pi} \left( \mu_1 \|u\|_{\ast}^{2}\|u\|_{2}^{2}+ \mu_2 \|v\|_{\ast}^{2}\|v\|_{2}^{2} + \beta   \|u\|_{\ast}^{2} \|v\|_{2}^{2}+\beta \|v\|_{\ast}^{2} \|u\|_{2}^{2} \right).
\end{align}

\begin{proposition}\label{pr:2.1} (Gagliardo-Nirenberg inequality)(See \cite{NL})
\ Let $u \in L^{q}(\mathbb R^{N})$ and it is derivatives of order $m$, $D^{m}u \in L^{r}(\mathbb R^{N})$, $1\leq q,r \leq \infty$.
For the derivatives $D^{j}u$, $0\leq j <m$, the following inequalities hold
\begin{align} \label{E:2.7}
\|D^{j}u\|_{p} \leq C \|u\|_{q}^{1-a} \|D^{m}u\|_{r}^{a},
\end{align}
where
$$\frac{1}{p}=\frac{j}{N}+a(\frac{1}{r}-\frac{m}{N})+(1-a)\frac{1}{q},$$
for all $\frac{j}{m} \leq a \leq 1$ and the constant $C$ depending only on $N,m,j,p,r,a$.
\end{proposition}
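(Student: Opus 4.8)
The plan is to combine a scaling (dimensional-analysis) reduction with an induction on the order of differentiation, resting on two classical building blocks: the Gagliardo slicing proof of the Sobolev inequality and a one-dimensional interpolation estimate for intermediate derivatives. First I would record the scaling argument: replacing $u$ by $u_\lambda(x):=u(\lambda x)$ and comparing the three quantities $\|D^{j}u_\lambda\|_{p}$, $\|u_\lambda\|_{q}$, $\|D^{m}u_\lambda\|_{r}$ shows that the exponent identity $\frac1p=\frac jN+a\big(\frac1r-\frac mN\big)+(1-a)\frac1q$ is exactly the condition under which the desired inequality is invariant under dilations. Hence this relation is forced, and it suffices to prove a single scale-invariant estimate in each case.

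Next I would treat the base case $m=1$, $j=0$, namely $\|u\|_{p}\le C\|u\|_{q}^{1-a}\|\nabla u\|_{r}^{a}$. At the endpoint $a=1$ (with $r<N$) this is the Sobolev inequality $\|u\|_{Nr/(N-r)}\le C\|\nabla u\|_{r}$, which I would prove by Gagliardo's argument: bound $|u(x)|$ pointwise by the integral of $|\nabla u|$ along each coordinate axis through $x$, multiply the $N$ resulting one-dimensional bounds, and integrate one variable at a time using the generalized H\"older inequality. For intermediate $a\in(0,1)$ the exponent relation reads $\frac1p=\frac{1-a}{q}+\frac{a}{p^{\ast}}$ with $p^{\ast}=Nr/(N-r)$, so H\"older interpolation between $L^{q}$ and $L^{p^{\ast}}$ yields $\|u\|_{p}\le\|u\|_{q}^{1-a}\|u\|_{p^{\ast}}^{a}\le C\|u\|_{q}^{1-a}\|\nabla u\|_{r}^{a}$.

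The remaining cases, $m\ge2$ or $j\ge1$, I would reach by a downward induction on $m-j$, whose engine is a one-dimensional interpolation inequality controlling an intermediate derivative by the function and a higher derivative, of the form $\|f'\|_{s}\le C\|f\|_{s}^{1/2}\|f''\|_{s}^{1/2}$. Such an estimate follows from the fundamental theorem of calculus applied on suitably chosen short intervals (for $s=2$ directly from $\int|f'|^{2}=-\int f f''$ together with Cauchy--Schwarz). Applying it direction by direction in $\mathbb R^{N}$, combining with the base case and H\"older, and iterating lowers the order of the top derivative one step at a time until only first derivatives and the H\"older interpolation of the base case remain; tracking the exponents through the iteration reproduces the stated relation for $a\in[j/m,1]$.

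I expect the main obstacle to be attaining full generality uniformly, rather than any single estimate. The endpoint and borderline configurations --- $r\ge N$ (where the Sobolev target degenerates and $p=\infty$ must be reached through Morrey-type or logarithmic estimates), the lower extreme $a=j/m$, and the cases $r=1$ or $p=\infty$ --- do not follow from the clean H\"older interpolation and require separate treatment. Moreover, keeping the constant's dependence on $N,m,j,p,r,a$ consistent along the induction, while routine, is delicate to state cleanly; it is precisely this uniform bookkeeping across the entire admissible exponent range that makes the full theorem cumbersome to prove from scratch, which is why we simply invoke the classical result of \cite{NL}.
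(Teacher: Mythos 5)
The paper does not prove this proposition at all: it is the classical Gagliardo--Nirenberg inequality, stated verbatim and attributed to \cite{NL}, so your concluding move of simply invoking that reference is exactly what the authors do. The sketch you give beforehand (scaling invariance forcing the exponent relation, Gagliardo's slicing proof of the Sobolev base case, H\"older interpolation, and induction on the derivative order via one-dimensional intermediate-derivative estimates) is a faithful outline of the standard proof, and your honest acknowledgment that the borderline cases ($r\geq N$, $p=\infty$, $a=j/m$) require the full machinery of \cite{NL} is consistent with the paper's decision to cite rather than prove.
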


\begin{lemma}\label{lm:2.2} (Lemma 2.1 of \cite{CW})  \ Let $\{u_{n}\} $ be a sequence in $ L^{2}(\mathbb R^{2})$
such that $u_{n}\to u\in L^{2}(\mathbb R^{2})\setminus\{0\} $ pointwise a.e. on  $\mathbb R^2$
and $\{v_{n}\} $ be a bounded sequence in $L^{2}(\mathbb R^{2})$ such that
$$\sup\limits_{n\in \mathbb N} I_{1}(u_{n}^{2},v_{n}^{2})<\infty.$$
Then there exists $n_{0}\in \mathbb N$ and $C>0$ such that $\|v_{n}\|_{\ast}<C$ for $n\geq n_{0}$.
\end{lemma}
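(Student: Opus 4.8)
The plan is to exploit the nonnegativity of the kernel $\ln(1+|x-y|)$ together with the nontriviality of the limit $u$: because $u \neq 0$, the masses $u_n^2$ stay uniformly bounded below on a fixed set of positive measure, and this will force the weight $\ln(1+|y|)$ to be integrable against $v_n^2$ uniformly in $n$. The uniform $L^2$-bound on $\{v_n\}$ then controls the remaining bounded region.

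First I would localize the mass of $u$. Since $u\in L^2(\mathbb R^2)\setminus\{0\}$, there exist $R>0$, $\delta>0$ and a measurable set $A\subset B_R(0)$ with $|A|>0$ on which $|u|\geq\delta$. Because $u_n\to u$ is only known pointwise a.e., I would invoke Egorov's theorem on the \emph{finite-measure} ball $B_R(0)$ to extract a subset $A'\subset A$ with $|A'|\geq |A|/2>0$ on which $u_n\to u$ uniformly. Hence there is $n_0\in\mathbb N$ with $u_n^2(x)\geq \delta^2/4$ for all $x\in A'$ and all $n\geq n_0$.

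Next I would establish a pointwise lower bound for the kernel. For $x\in A'\subset B_R(0)$ and $|y|$ large we have $|x-y|\geq |y|-R\geq |y|/2$, and since $\ln(1+|y|/2)/\ln(1+|y|)\to 1$ as $|y|\to\infty$, there is $R_2\geq 2R$ such that $\ln(1+|x-y|)\geq\tfrac12\ln(1+|y|)$ whenever $x\in A'$ and $|y|\geq R_2$. Discarding all contributions to $I_1$ except those with $x\in A'$ and $|y|\geq R_2$ (legitimate since the integrand is nonnegative), I obtain for $n\geq n_0$
\[
\sup_{m}I_{1}(u_m^2,v_m^2)\ \geq\ I_{1}(u_n^2,v_n^2)\ \geq\ \frac{\delta^2\,|A'|}{16\pi}\int_{|y|\geq R_2}\ln(1+|y|)\,v_n^2\,\mathrm{d}y,
\]
so the tail $\int_{|y|\geq R_2}\ln(1+|y|)v_n^2\,\mathrm{d}y$ is bounded uniformly for $n\geq n_0$.

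Finally, on the complementary region $|y|<R_2$ the weight is bounded by $\ln(1+R_2)$, giving $\int_{|y|<R_2}\ln(1+|y|)v_n^2\,\mathrm{d}y\leq \ln(1+R_2)\,\|v_n\|_2^2$, which is uniformly bounded since $\{v_n\}$ is bounded in $L^2(\mathbb R^2)$. Adding the two pieces yields $\|v_n\|_{\ast}^2\leq C$ for $n\geq n_0$, as claimed. I expect the main subtlety to be the uniform lower bound on $u_n^2$: pointwise convergence alone does not provide uniformity, which is precisely why Egorov's theorem—applied on a single finite-measure ball $B_R(0)$—is the right device. The kernel comparison $\ln(1+|x-y|)\geq\tfrac12\ln(1+|y|)$ is then merely an elementary asymptotic estimate, and the bounded region is handled trivially by the $L^2$-boundedness of $\{v_n\}$.
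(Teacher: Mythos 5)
Your proof is correct, and it is essentially the argument behind Lemma 2.1 of Cingolani--Weth, which this paper cites in place of giving a proof: use Egorov's theorem on a finite-measure ball to get a uniform positive lower bound for $u_n^2$ on a set $A'$ of positive measure, then bound the kernel $\ln(1+|x-y|)$ from below for $x\in A'$ so that the hypothesis $\sup_n I_1(u_n^2,v_n^2)<\infty$ controls $\int \ln(1+|y|)v_n^2\,\mathrm{d}y$. The only cosmetic difference is your near/far splitting $|y|\ge R_2$ versus $|y|<R_2$, where the cited proof instead applies the subadditivity bound $\ln(1+|x-y|)\ \ge\ \ln(1+|y|)-\ln(1+|x|)\ \ge\ \ln(1+|y|)-\ln(1+R)$ for all $y$ at once; both versions invoke the $L^2$-boundedness of $\{v_n\}$ in the same way, so the two arguments are interchangeable.
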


\begin{lemma}\label{lm:2.3}
We have the following properties:

$(i)$:  For all $p\in[2,\infty)$, the embedding  $X \hookrightarrow L^{p}(\mathbb R^{2})\times L^{p}(\mathbb R^{2})$ is compact;

$(ii)$: The functionals $A_{0},A_{1},A_{2}$ and $\mathcal J$ are of class $C^{1}$ on $X$. Moreover, for any $(u,v), (\phi,\psi) \in X $,
\begin{align} \label{E:2.8}
\langle A_{i}^{\prime}(u,v) ,(\phi,\psi) \rangle =&4\mu_1 I_{i}(u\phi,u^{2})+4\mu_2 I_{i}(v\psi,v^{2}) \vspace{.1cm}  \nonumber\\
&+ 4\beta I_{i}(v\psi,u^{2})+4\beta I_{i}(u\phi,v^{2}),~~~~ (i=0,1,2) ;
\end{align}

$(iii)$: $\mathcal J$ is weakly lower semicontinuous on $X$.
\end{lemma}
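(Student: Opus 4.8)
The plan is to prove the three assertions in order, since both (ii) and (iii) rely on the compact embedding established in (i).

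For (i), the point is that the logarithmic weight upgrades the (non-compact) Sobolev embedding $H^1(\mathbb{R}^2)\hookrightarrow L^p(\mathbb{R}^2)$ to a compact one. Given a bounded sequence $(u_n,v_n)$ in $X$, I would pass to a subsequence with $u_n\rightharpoonup u$ and $v_n\rightharpoonup v$ weakly in $H^1$, so that Rellich--Kondrachov gives $u_n\to u$, $v_n\to v$ strongly in $L^p(B_R)$ on every ball $B_R$. The tails are then controlled directly by the weight: for every $R>0$,
\[
\int_{|x|>R} u_n^2\,\mathrm{d}x \leq \frac{1}{\ln(1+R)}\int_{\mathbb{R}^2}\ln(1+|x|)u_n^2\,\mathrm{d}x \leq \frac{C}{\ln(1+R)},
\]
which tends to $0$ uniformly in $n$ as $R\to\infty$; this yields strong $L^2$ convergence. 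For $p>2$ I would interpolate $\|u_n-u\|_{L^p(|x|>R)}$ between the small $L^2$-tail and a fixed higher $L^q$-norm bounded via Sobolev, obtaining strong convergence in $L^p(\mathbb{R}^2)\times L^p(\mathbb{R}^2)$.

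For (ii), I would first note that each $B_i(u^2,v^2)$ is a finite sum of fourth-order forms in $(u,v)$ built from the symmetric bilinear maps $I_i$, and that \eqref{E:2.4} and \eqref{E:2.6} show $A_2$ is finite and continuous on $L^{8/3}\times L^{8/3}$ while $A_1$ is finite and continuous on $X$. The Gateaux derivative is computed by expanding $(u+t\phi)^2$ and differentiating at $t=0$, using bilinearity and symmetry of $I_i$: the diagonal terms give $\frac{d}{dt}\big|_{t=0}I_i((u+t\phi)^2,(u+t\phi)^2)=4I_i(u\phi,u^2)$ and the coupling term gives $\frac{d}{dt}\big|_{t=0}I_i((u+t\phi)^2,(v+t\psi)^2)=2I_i(u\phi,v^2)+2I_i(v\psi,u^2)$, which assemble precisely into \eqref{E:2.8}. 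Continuity of $(u,v)\mapsto A_i'(u,v)$ then follows from the same estimates applied to the multilinear remainder, so $A_0=A_1-A_2$ and hence $\mathcal J=\tfrac12\|\cdot\|_H^2+\tfrac14 A_0$ are of class $C^1$ on $X$.

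For (iii), I split $\mathcal J$ into three pieces. The quadratic part $\tfrac12\|(u,v)\|_H^2$ is weakly lower semicontinuous since weak convergence in $X$ forces weak convergence in $H$ and the $H$-norm is weakly lsc. The term $-\tfrac14 A_2$ is in fact weakly continuous: by (i) a weakly convergent sequence in $X$ converges strongly in $L^{8/3}\times L^{8/3}$, on which $A_2$ is continuous by \eqref{E:2.4}. Finally $\tfrac14 A_1\geq 0$ is weakly lsc by Fatou's lemma, since along the subsequence where $u_n\to u$ and $v_n\to v$ a.e. the nonnegative integrand $\ln(1+|x-y|)\big(\mu_1 u_n^2(x)u_n^2(y)+\mu_2 v_n^2(x)v_n^2(y)+2\beta u_n^2(x)v_n^2(y)\big)$ converges a.e. on $\mathbb{R}^2\times\mathbb{R}^2$, giving $A_1(u,v)\leq\liminf_n A_1(u_n,v_n)$. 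Summing the three contributions yields the claim. I expect the genuine obstacle to be the continuity of $A_1'$ in (ii): because the kernel $\ln(1+|x-y|)$ is unbounded, controlling $A_1'(u_n,v_n)-A_1'(u,v)$ requires combining the weighted bound \eqref{E:2.6} (via the splitting $\ln(1+|x-y|)\leq\ln(1+|x|)+\ln(1+|y|)$) with both the $L^p$ and the $\|\cdot\|_{\ast}$ convergences, rather than any single uniform kernel estimate; the tail estimate in (i) is the other delicate step, though by now a routine consequence of the logarithmic weight.
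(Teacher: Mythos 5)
Your proposal is correct, and it is essentially the argument the paper relies on: the paper omits the proof entirely, citing Lemma 2.3 of \cite{CDZ} and Lemma 2.2 of \cite{CW}, and those references proceed exactly as you do --- the logarithmic-weight tail estimate plus Rellich--Kondrachov and interpolation for the compact embedding, the multilinear expansion of the symmetric forms $I_i$ for the derivative formula \eqref{E:2.8}, and the splitting $\mathcal J=\tfrac12\|\cdot\|_H^2+\tfrac14A_1-\tfrac14A_2$ with Fatou's lemma on the nonnegative $A_1$-integrand and weak continuity of $A_2$ via strong $L^{8/3}$ convergence for the weak lower semicontinuity. You also correctly identify the continuity of $A_1'$ (requiring both the $\|\cdot\|_{\ast}$ and $L^p$ estimates) as the only genuinely delicate step, which is where the cited proofs invest their effort.
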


\begin{proof}
The proof is similar to that of Lemma 2.3 in \cite{CDZ} and Lemma 2.2 in \cite{CW}, so we omit it.
\end{proof}

\begin{lemma}\label{lm:2.4}
Let $u\in X_1 \setminus \{0\} $. Then $ w_u\in L_{loc}^{\infty}(\mathbb R^2)$, and
\begin{align}\label{E:2.9}
w_{u}(x)+\frac{1}{2\pi} \|u\|_{2}^{2} \ln(|x|) \to 0 \ \  \ \mbox{as}\ |x| \to  +\infty,
\end{align}
where $\displaystyle w_{u} (x):= - \int_{\mathbb R^{2}} \frac{1}{2\pi} \ln (|x-y|) u^{2}(y) \mathrm{d}y $.
Moreover, if $u \in  C_{loc}^{1,\alpha}(\mathbb R^{2})$ for any $0\leq\alpha< 1$,  then  $w_{u} $ is of class $C^3(\mathbb R^2)$
and  satisfies $-\triangle w_{u}=u^{2}$ in $\mathbb R^{2}$.
\end{lemma}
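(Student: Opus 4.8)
The plan is to establish the three assertions in turn, using throughout the decomposition
$$\ln(|x-y|)=\ln(1+|x-y|)-\ln\Big(1+\tfrac{1}{|x-y|}\Big),$$
which already underlies the variational setup: it splits $w_u$ into a part governed by the slowly growing kernel $\ln(1+\cdot)$ and a part governed by the locally singular kernel $\ln(1+\tfrac{1}{\cdot})$. For the local boundedness $w_u\in L^{\infty}_{loc}(\mathbb R^2)$ I would bound the two pieces separately. For the growing piece the subadditivity $\ln(1+|x-y|)\le\ln(1+|x|)+\ln(1+|y|)$ yields $\int_{\mathbb R^2}\ln(1+|x-y|)u^2\,\mathrm{d}y\le \ln(1+|x|)\|u\|_2^2+\|u\|_{\ast}^2$, which is finite because $u\in X_1$ and is locally bounded in $x$. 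For the singular piece I would split over $\{|x-y|<1\}$ and $\{|x-y|\ge1\}$: on the latter the kernel is at most $\ln 2$, and on the former a H\"older estimate against $[\ln(1+\tfrac1{|z|})]^2\in L^1(B_1)$ and $u^2\in L^2_{loc}$ closes the bound.

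For the decay profile \eqref{E:2.9} I would first rewrite
$$w_u(x)+\frac{1}{2\pi}\|u\|_2^2\ln|x|=-\frac{1}{2\pi}\int_{\mathbb R^2}\ln\frac{|x-y|}{|x|}\,u^2(y)\,\mathrm{d}y,$$
and then decompose $\ln\tfrac{|x-y|}{|x|}=\ln\tfrac{1+|x-y|}{1+|x|}-\ln(1+\tfrac1{|x-y|})+\ln(1+\tfrac1{|x|})$. The key point is the uniform bound $\big|\ln\tfrac{1+|x-y|}{1+|x|}\big|\le\ln(1+|y|)$, which follows from $\tfrac{1+|x-y|}{1+|x|}\le 1+|y|$ and $\tfrac{1+|x|}{1+|x-y|}\le 1+|y|$ via the triangle inequality; since $\ln(1+|y|)u^2\in L^1(\mathbb R^2)$ and the integrand tends to $0$ pointwise, dominated convergence forces this contribution to vanish as $|x|\to\infty$. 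The term $\ln(1+\tfrac1{|x|})\|u\|_2^2$ vanishes trivially, and the singular contribution $\int_{\mathbb R^2}\ln(1+\tfrac1{|x-y|})u^2\,\mathrm{d}y\to0$ after isolating the near-diagonal piece $\{|x-y|<1\}$ (controlled by H\"older together with the vanishing tail $\int_{|y|>|x|-1}u^4\to0$, valid since $u\in L^4$) from the off-diagonal piece, where the kernel is at most $\tfrac{2}{|x|}$ on $\{|y|\le|x|/2\}$ and at most $\ln 2$ elsewhere, again paired with a vanishing $L^2$-tail.

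For the regularity claim I would assume $u\in C^{1,\alpha}_{loc}(\mathbb R^2)$, so that $f:=u^2\in C^{1,\alpha}_{loc}(\mathbb R^2)$, and write $w_u=\Gamma_2\ast f$ with $\Gamma_2(x)=-\tfrac1{2\pi}\ln|x|$ the fundamental solution of $-\Delta$ in $\mathbb R^2$. Localizing with a cutoff $\eta\equiv1$ on $B_R$ and supported in $B_{2R}$, I split $w_u=\Gamma_2\ast(\eta f)+\Gamma_2\ast((1-\eta)f)$. The first term is the Newtonian potential of a compactly supported $C^{1,\alpha}$ density, hence of class $C^{3,\alpha}$, and satisfies $-\Delta[\Gamma_2\ast(\eta f)]=\eta f=f$ on $B_R$ by the classical Poisson equation together with Schauder estimates. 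The second term is harmonic on $B_{R/2}$: because $\ln|x-y|$ is harmonic off the diagonal and the tail integrals $\int_{|y|\ge R}|\nabla_x^{\,k}\ln|x-y||\,u^2\,\mathrm{d}y$ converge (using $|\nabla_x^{\,k}\ln|x-y||\lesssim|y|^{-k}$ for $|y|\ge R$, $x\in B_{R/2}$, and $u^2\in L^1$), one may differentiate under the integral sign. Hence $w_u\in C^3(B_{R/2})$ with $-\Delta w_u=u^2$ there, and letting $R\to\infty$ gives $w_u\in C^3(\mathbb R^2)$ solving $-\Delta w_u=u^2$ on all of $\mathbb R^2$.

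The step I expect to be the main obstacle is the decay statement \eqref{E:2.9}. Establishing the uniform majorant $\ln(1+|y|)$ that makes dominated convergence applicable is delicate, and, more seriously, the singular kernel $\ln(1+\tfrac1{|x-y|})$ must be controlled near the diagonal \emph{uniformly} as $|x|\to\infty$, where the local integrability of the kernel has to be combined carefully with the vanishing-tail estimate on $u^4$; the remaining two assertions are then comparatively routine applications of H\"older's inequality and classical potential theory.
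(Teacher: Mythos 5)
Your proof is correct, but note that the paper itself offers no argument for this lemma: its ``proof'' is a one-line deferral to \cite{CDZ} and \cite{CW}, so what you have written is a self-contained reconstruction of what those references establish, organized around exactly the decomposition $\ln|x-y|=\ln(1+|x-y|)-\ln\bigl(1+\tfrac{1}{|x-y|}\bigr)$ that the paper uses in Section 2 to make $\mathcal J$ well-defined. All three steps check out: (i) local boundedness follows from the subadditivity $\ln(1+|x-y|)\le\ln(1+|x|)+\ln(1+|y|)$ (this is where $u\in X_1$ enters) together with a H\"older bound against the kernel $\ln\bigl(1+\tfrac1{|z|}\bigr)\in L^2(B_1)$ and $u^2\in L^2$; (ii) the asymptotics follow from the rewriting $w_u(x)+\tfrac{1}{2\pi}\|u\|_2^2\ln|x|=-\tfrac{1}{2\pi}\int\ln\bigl(\tfrac{|x-y|}{|x|}\bigr)u^2\,\mathrm{d}y$, the uniform majorant $\bigl|\ln\tfrac{1+|x-y|}{1+|x|}\bigr|\le\ln(1+|y|)$ justifying dominated convergence, and the tail estimates you give for the singular part (the splitting $\{|y|\le|x|/2\}$ versus $\{|y|>|x|/2\}$ off the diagonal, and the vanishing $L^4$-tail near the diagonal, are exactly what is needed); (iii) the regularity claim is classical potential theory: the compactly supported piece gives a $C^{3,\alpha}$ Newtonian potential solving the Poisson equation, and the far-field piece is harmonic near the origin of the cutoff ball by differentiation under the integral. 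Two minor points worth tightening: in (iii) you should fix some $\alpha\in(0,1)$ rather than allow $\alpha=0$ --- the hypothesis ``for any $0\le\alpha<1$'' permits this, but $C^{1,0}=C^{1}$ alone would not support the Schauder-type conclusion --- and the dominated convergence argument in (ii) should formally be run along arbitrary sequences $|x_n|\to\infty$, which is routine. Your closing worry about the near-diagonal singular kernel is resolved correctly by your own estimate, since $\|u\|_{L^4(B_1(x))}\to0$ as $|x|\to\infty$ for $u\in L^4(\mathbb R^2)$.
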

\begin{proof}
The proof has been given in  \cite{CDZ} or \cite{CW}.
\end{proof}

\begin{lemma}\label{lm:2.5}
Assume that $\beta , \lambda_i, \mu_i >0 ~ (i=1,2)$. If $(u,v)\in X \setminus \{(0,0)\}$ is a weak solution of \eqref{E:1.5}. Then

$(i)$:   $u,v\in C ^{2} (\mathbb R^{2})$;

$(ii)$:  $u, v$ decay exponentially, i.e., there exist $C_{1}>0$ and $C_{2}>0$ such that
\begin{align}\label{E:2.10}
|u(x)|,  |v(x)| \leq C_{1}e^{- C_{2}|x|}.
\end{align}
\end{lemma}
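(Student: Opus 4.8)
The plan is to establish regularity first, then use the decay of the convolution potential from Lemma~\ref{lm:2.4} to rewrite the system as a linear Schrödinger-type inequality with a potential that becomes large at infinity, from which exponential decay follows by a standard comparison argument. For part $(i)$, I would start from the weak formulation of \eqref{E:1.5} and bootstrap. Writing $W_u(x):=-\frac{1}{2\pi}\ln(|x|)\ast u^2$ and $W_v(x):=-\frac{1}{2\pi}\ln(|x|)\ast v^2$, the equations read $-\Delta u+\lambda_1 u=(\mu_1 W_u+\beta W_v)u$ and $-\Delta v+\lambda_2 v=(\mu_2 W_v+\beta W_u)v$. By Lemma~\ref{lm:2.4}, since $u,v\in X_1$, the potentials $W_u,W_v$ are in $L^\infty_{loc}(\mathbb R^2)$, so the right-hand sides lie in $L^p_{loc}$ for every finite $p$ (using $u,v\in L^p$ for all $p\in[2,\infty)$ via the embedding in Lemma~\ref{lm:2.3}$(i)$). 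Elliptic $L^p$-estimates then give $u,v\in W^{2,p}_{loc}$, hence $u,v\in C^{1,\alpha}_{loc}$ by Sobolev embedding. The second part of Lemma~\ref{lm:2.4} now upgrades $W_u,W_v$ to $C^3(\mathbb R^2)$, so the right-hand sides become $C^{1,\alpha}_{loc}$, and Schauder estimates yield $u,v\in C^{2,\alpha}_{loc}\subset C^2(\mathbb R^2)$.

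For part $(ii)$, the key observation is the asymptotics \eqref{E:2.9}: as $|x|\to\infty$ we have $W_u(x)=-\frac{1}{2\pi}\|u\|_2^2\ln|x|+o(1)\to-\infty$, and likewise $W_v(x)\to-\infty$. Since $\mu_1,\mu_2,\beta>0$, both coefficient functions $\mu_1 W_u+\beta W_v$ and $\mu_2 W_v+\beta W_u$ tend to $-\infty$. Therefore, fixing any constant $M>0$, there is $R>0$ such that for $|x|\ge R$,
\begin{align*}
-\Delta u+(\lambda_1+M)u\le 0,\qquad -\Delta v+(\lambda_2+M)v\le 0,
\end{align*}
for the (nonnegative parts of the) components; more precisely the potential terms are negative and can be absorbed, giving $-\Delta u\le -\lambda_1 u$ outside a large ball, and the same for $v$. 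This is exactly the regime where one compares against an exponentially decaying supersolution.

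The main technical step is the comparison argument. I would fix $0<\sigma<\sqrt{\lambda_1}$ (respectively $\sqrt{\lambda_2}$) and use the radial supersolution $\bar u(x)=Ce^{-\sigma|x|}$, which satisfies $-\Delta\bar u+\lambda_1\bar u=\bigl(\lambda_1-\sigma^2+\frac{\sigma}{|x|}\bigr)\bar u>0$ for $|x|\ge R$. Choosing $C$ large enough that $\bar u\ge u$ on the sphere $|x|=R$ (possible since $u$ is continuous and bounded, while $u(x)\to 0$ by the compact embedding forcing $u\in L^p$ and by $u\in W^{2,p}$ giving $u\in C_0$), the difference $\bar u-u$ is a supersolution of the operator $-\Delta+c(x)$ with $c(x)\ge 0$ on $\{|x|\ge R\}$, is nonnegative on the boundary $|x|=R$, and tends to $0$ at infinity; the maximum principle on the exterior domain then forces $\bar u-u\ge 0$ everywhere outside $B_R$, i.e.\ $|u(x)|\le Ce^{-\sigma|x|}$. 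The same argument applied to $v$ yields $|v(x)|\le C e^{-\sigma'|x|}$, and taking $C_1=\max\{C,C'\}$ and $C_2=\min\{\sigma,\sigma'\}$ gives \eqref{E:2.10}.

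The step I expect to be the main obstacle is making the comparison rigorous on the exterior domain: one must confirm that the potential terms genuinely have the right sign (which relies crucially on $\beta,\mu_i>0$ so that no cancellation occurs and both $W_u,W_v\to-\infty$), verify that $u,v\to 0$ at infinity so that the exterior maximum principle applies to the vanishing-at-infinity difference $\bar u-u$, and correctly handle the gradient term $\frac{\sigma}{|x|}\bar u$ so that $\bar u$ is a genuine supersolution for all large $|x|$. Once the sign of the potential and the decay of $u,v$ are secured, the comparison principle delivers the exponential bound cleanly; the delicate point is purely that the logarithmic potential's unbounded growth must be exploited in the correct direction, which is where the positivity hypotheses on the parameters are essential.
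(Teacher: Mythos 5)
Your part $(i)$ is essentially the paper's own proof: write the system as $-\Delta u+\lambda_1 u=f_1$, $-\Delta v+\lambda_2 v=f_2$ with $f_1=(\mu_1 w_u+\beta w_v)u$, $f_2=(\mu_2 w_v+\beta w_u)v$, use $w_u,w_v\in L^{\infty}_{loc}(\mathbb R^2)$ (Lemma \ref{lm:2.4}) to get $u,v\in W^{2,p}_{loc}$, Sobolev embedding to get $C^{1,\alpha}_{loc}$, the second part of Lemma \ref{lm:2.4} to upgrade $w_u,w_v$ to $C^3$, and Schauder theory to conclude $u,v\in C^2(\mathbb R^2)$. For part $(ii)$, however, you take a genuinely different route: the paper disposes of the decay in a single sentence by citing Agmon's theorem \cite{AS}, whereas you construct an explicit radial barrier $\bar u=Ce^{-\sigma|x|}$ with $\sigma^2<\lambda_1$ and run an exterior comparison argument, using \eqref{E:2.9} together with $\mu_i,\beta>0$ to guarantee that $\mu_1 w_u+\beta w_v\to-\infty$, so that the effective potential $\lambda_1-\mu_1 w_u-\beta w_v$ is bounded below by $\lambda_1$ outside a large ball. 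Your argument is more elementary and self-contained; the Agmon route leans on general theory and would in fact yield a faster-than-exponential rate (since the effective potential diverges logarithmically), but only exponential decay is claimed, so both suffice. The structural input is the same in both proofs.

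Two steps in your comparison argument need shoring up. First, the lemma does not assume $(u,v)$ is positive, so the pointwise inequality ``$-\Delta u\le-\lambda_1 u$ outside a large ball'' is not literally available for sign-changing $u$; this is harmless, because for fixed potentials the equation is linear in $u$: apply the maximum principle to both $w_{\pm}:=\bar u\mp u$, each of which satisfies $-\Delta w_{\pm}+(\lambda_1-\mu_1 w_u-\beta w_v)w_{\pm}=-\Delta\bar u+(\lambda_1-\mu_1 w_u-\beta w_v)\bar u>0$ on $\{|x|>R\}$, giving $|u|\le\bar u$ there. Second, and more seriously, your justification that $u,v\to0$ at infinity (``$u\in L^p$ and $u\in W^{2,p}$ giving $u\in C_0$'') has a gap: only \emph{local} $W^{2,p}$ bounds are available, and the translation-invariant elliptic estimate on balls $B_1(x_0)$ carries the factor $\|f_1\|_{L^p(B_2(x_0))}\le C\ln(2+|x_0|)\,\|u\|_{L^p(B_2(x_0))}$, which the information $u\in L^p(\mathbb R^2)\cap X_1$ alone does not force to vanish as $|x_0|\to\infty$. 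The clean repair again exploits the sign of the potential: outside a large ball, Kato's inequality gives $-\Delta|u|\le(\mu_1 w_u+\beta w_v-\lambda_1)|u|\le0$ in the distributional sense, so $|u|$ is subharmonic there; the mean value inequality $|u(x_0)|\le C\|u\|_{L^2(B_1(x_0))}$ together with $u\in L^2(\mathbb R^2)$ then yields $u(x_0)\to0$, and likewise for $v$. With these two repairs your proof is complete and correct.
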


\begin{proof}
We set
$$f_{1}(x):=\mu_1 w_{u}(x)u(x)+\beta w_{v}(x)u(x),$$
and
$$f_{2}(x):=\mu_2 w_{v}(x)v(x)+\beta w_{u}(x)v(x),$$
where $w_{u}$ and $w_{v}$ are defined in Lemma \ref{lm:2.4}.
The system \eqref{E:1.5} can be written as
\begin{align}\label{E:2.11}
\left\{ \displaystyle\begin{array}{ll} \displaystyle
-\Delta u+ \lambda_1 u
=f_{1}(x),
&\mbox{in}\ \mathbb R^2, \vspace{.2cm}\\ \displaystyle
-\Delta v+ \lambda_2 v
=f_{2}(x) ,
&\mbox{in}\ \mathbb R^2.
\end{array}\right.
\end{align}
Then,  for any bounded open subset $W\subset\subset \mathbb R^{2}$,  we find that, for any $p\in[1,\infty)$,
\begin{align}\label{E:2.12}
\aligned
\|f_{1}(x)\|^p_{L^{p}(W)}&\leq C_1 \int_{W} | w^2_{u}(x)+2 u^2(x)+ w^2_{v}(x)|^{p} \mathrm{d}x \vspace{.2cm}\\
& \leq C_2 \big(\|w_{u}\|_{L^{\infty}(W)}^{2p} + \|u\|_{L^{2p}(W)}^{2p} +  \|w_{v}\|_{L^{\infty}(W)}^{2p}\big) \leq C, \vspace{.2cm}\\
\|f_{2}(x)\|^{p}_{L^{p}(W)}&\leq C_2 \big( \|w_{u}\|_{L^{\infty}(W)}^{2p} + \|v\|_{L^{2p}(W)}^{2p} +  \|w_{v}\|_{L^{\infty}(W)}^{2p}\big) \leq C,
\endaligned
\end{align}
since $ w_u,w_v \in L_{loc}^{\infty}(\mathbb R^2)$ and $(u,v)\in X\subset H$.
Due to  \eqref{E:2.12} and the arbitrariness of $W$, the Interior $H^{2}$--Regularity theory implies
that  $u,v \in W^{2,p}_{loc}(\mathbb R^{2})$ for any $p\in[1,\infty)$.
By Sobolve embedding, we have that $u,v\in  C_{loc}^{1,\alpha}(\mathbb R^{2})$ for any $0\leq\alpha <1$,
which, together with Lemma \ref{lm:2.4}, shows that  $w_{u},w_{v}\in C^3(\mathbb R^{2})$.
Then, it is easy to see that $f_{1},f_{2}$ are locally H\"{o}lder continuous.
Therefore, $u,v\in C^{2}(\mathbb R^{2})$ by elliptic regularity theorem.

It follows from  the Agmon's Theorem (See \cite{AS}) that  $u,v$ decay exponentially.  We complete the proof.
\end{proof}

\begin{lemma}\label{lm:2.6}
Assume that $\beta , \lambda_i, \mu_i >0 (i=1,2)$. Then

$(i)$:  For any $(u,v)\in X\setminus \{(0,0)\} $, there exists a positive constant $t_{0}>0$
such that
$$\big(t^2_0 u(t_0x),t_0^2 v(t_0x)\big)\in  \mathcal{N};$$

\vspace{.1cm}

$(ii)$:  There exists $\zeta>0$ such that $\|(u,v)\|_{H}\geq \zeta$ for any $(u,v)\in \mathcal{N}$;

\vspace{.1cm}

$(iii)$: $c=\inf\limits_{(u,v)\in\mathcal{N}} \mathcal{J}(u,v)>0.$
\end{lemma}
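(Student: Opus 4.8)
The plan is to first reduce the Nehari functional $N$ to a transparent form. Pairing $\mathcal{J}'$ with $(u,v)$ and invoking the derivative formula \eqref{E:2.8} with $i=0$ together with the symmetry $I_0(u^2,v^2)=I_0(v^2,u^2)$, one gets $\langle A_0'(u,v),(u,v)\rangle = 4A_0(u,v)$, and hence
\[
N(u,v)=\|(u,v)\|_H^2+A_0(u,v).
\]
This yields two identities on $\mathcal{N}$ that drive everything: $A_0(u,v)=-\|(u,v)\|_H^2$, and consequently $\mathcal{J}(u,v)=\tfrac12\|(u,v)\|_H^2+\tfrac14 A_0(u,v)=\tfrac14\|(u,v)\|_H^2$.

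For $(i)$, I would introduce the scaling $(u_t,v_t)(x):=(t^2u(tx),t^2v(tx))$ for $t>0$ and record how $N$ transforms. A change of variables gives $\|(u_t,v_t)\|_H^2=t^4(\|\nabla u\|_2^2+\|\nabla v\|_2^2)+t^2(\lambda_1\|u\|_2^2+\lambda_2\|v\|_2^2)$, while the logarithmic term, using $\ln(|x-y|/t)=\ln(|x-y|)-\ln t$, produces $A_0(u_t,v_t)=t^4A_0(u,v)-\tfrac{t^4\ln t}{2\pi}Q(u,v)$, where $Q(u,v):=\mu_1\|u\|_2^4+\mu_2\|v\|_2^4+2\beta\|u\|_2^2\|v\|_2^2>0$ since $(u,v)\neq(0,0)$ and $\mu_i,\beta>0$. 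I would then analyze $g(t):=t^{-2}N(u_t,v_t)$, noting $g(t)\to\lambda_1\|u\|_2^2+\lambda_2\|v\|_2^2>0$ as $t\to0^+$ (because $t^2\ln t\to0$) and $g(t)\to-\infty$ as $t\to+\infty$ (the $-\tfrac{t^2\ln t}{2\pi}Q$ term dominates). The intermediate value theorem supplies $t_0>0$ with $N(u_{t_0},v_{t_0})=0$; since scaling preserves nontriviality and membership in $X$, we conclude $(u_{t_0},v_{t_0})\in\mathcal{N}$.

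For $(ii)$, I would exploit the sign structure $A_0=A_1-A_2$ with $A_1\ge0$ and $A_2\ge0$. On $\mathcal{N}$ the identity $A_0=-\|(u,v)\|_H^2$ rearranges to $\|(u,v)\|_H^2=A_2(u,v)-A_1(u,v)\le A_2(u,v)$. Bounding $A_2$ by \eqref{E:2.4} and then estimating $\|u\|_{8/3},\|v\|_{8/3}$ through the Sobolev embedding $H^1(\mathbb{R}^2)\hookrightarrow L^{8/3}(\mathbb{R}^2)$ (controlling $\|u\|_{H^1},\|v\|_{H^1}$ by $\|(u,v)\|_H$ up to a constant depending on $\lambda_1,\lambda_2$) yields $\|(u,v)\|_H^2\le C\|(u,v)\|_H^4$. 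Dividing by the positive quantity $\|(u,v)\|_H^2$ gives the uniform lower bound $\|(u,v)\|_H\ge\zeta:=C^{-1/2}$.

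Finally, $(iii)$ is immediate from the reduction: on $\mathcal{N}$ we have $\mathcal{J}(u,v)=\tfrac14\|(u,v)\|_H^2$, so $c=\inf_{\mathcal{N}}\mathcal{J}\ge\tfrac14\zeta^2>0$ by $(ii)$. The main obstacle is the bookkeeping in $(i)$: getting the scaling exponents right and, crucially, isolating the $-t^4\ln t$ contribution generated by the sign-changing logarithmic kernel. It is precisely this term---rather than pure homogeneity, which fails for the logarithmic convolution---that forces $g$ to change sign and makes the Nehari reduction possible. A minor technical point is to verify that $(u_t,v_t)$ stays in $X$, which follows from $\ln(1+|y|/t)\le\ln(1+|y|)+|\ln t|$ combined with $u,v\in L^2(\mathbb{R}^2)$.
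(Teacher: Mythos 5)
Your proposal is correct and follows essentially the same route as the paper: the same scaling $\big(t^2u(tx),t^2v(tx)\big)$ with the $-\frac{t^4\ln t}{2\pi}$ term forcing a sign change of $N$ for part $(i)$, the same chain $\|(u,v)\|_H^2=-A_0\le A_2\le C\|(u,v)\|_H^4$ via \eqref{E:2.4} and Gagliardo--Nirenberg/Sobolev for part $(ii)$, and the same identity $\mathcal J=\mathcal J-\tfrac14 N=\tfrac14\|(u,v)\|_H^2$ on $\mathcal N$ for part $(iii)$. Your minor variations (normalizing $g(t)$ by $t^{-2}$ to get a strictly positive limit at $t\to0^+$, spelling out $\langle A_0'(u,v),(u,v)\rangle=4A_0(u,v)$, and checking that the rescaled pair stays in $X$) are refinements of the paper's argument, not a different one.
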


\begin{proof}  $(i)$:   For any $(u,v)\in X\setminus \{(0,0)\} $. Consider
\begin{align*}
g(t):&=N \big(t^{2}u(tx),t^{2}v(tx)\big)\\
& = t^{4} \big(\|\nabla u\|_2^2 +\|\nabla v\|_2^2 \big) +  t^{2}  \big(\lambda_1 \|u\|_2^2+ \lambda_2 \|v\|^2_2\big)  \vspace{.1cm} \\
&\quad\,+ t^{4}  A_{0}(u,v)-\frac{t^{4}\ln t}{2\pi}\big( \mu_{1} \|u\|^{4}_{2} + \mu_{2} \|v\|^{4}_{2} + 2\beta \|u\|^{2}_{2} \|v\|^{2}_{2} \big), ~~~ t>0.
\end{align*}
It is easy to  see that $\lim\limits_{t\to 0^+} g(t) =0^+$  and $\lim\limits_{t\to +\infty }g(t) = - \infty$.
So there exists a   positive constant $t_0>0$ such that $N \big(t^2_0 u(t_0x),t_0^2 v(t_0x)\big) =0$, i.e. $\big(t^2_0 u(t_0x),t_0^2 v(t_0x)\big)\in  \mathcal{N}$.

\ $(ii)$:  We have
\begin{align*}
\|(u,v)\|_{H}^2
&=-A_0(u,v)=A_2 (u,v)-A_1 (u,v) \\
&\leq  A_2 (u,v)  \leq C_1 ( \|u\|^{4}_{\frac{8}{3}}+\|v\|^{4}_{\frac{8}{3}} )  \leq C \|(u,v)\|_{H}^4,
\ \ \forall (u,v)\in \mathcal N,
\end{align*}
which implies that, for any $(u,v)\in \mathcal N$,
\begin{align}\label{E:2.13}
\|(u,v)\|_{H}^{2}\geq \frac{1}{C}=:\zeta^2>0.
\end{align}

$(iii)$: Using $(ii)$,  we get that, for any $(u,v)\in \mathcal N$,
\begin{align*}
\mathcal J(u,v)&=\mathcal J(u,v)-\frac{1}{4} N(u,v)
= \frac{1}{4} \|(u,v)\|_H^2\geq \frac{1}{4} \zeta^2  >0.
\end{align*}
Hence $c=\inf\limits_{(u,v)\in\mathcal N} \mathcal{J}(u,v)\geq \frac{1}{4} \zeta^2  >0$.
\end{proof}

\begin{lemma}\label{lm:2.7}   Assume that $\beta , \lambda_i, \mu_i >0 (i=1,2)$. Then $\mathcal N$ is a $C^{1}$--manifold
and any critical point of $\mathcal{J}|_\mathcal{N}$ is a critical point of $\mathcal{J}$ in $X$.
\end{lemma}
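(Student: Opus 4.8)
The plan is to realize $\mathcal N$ as the zero level set of a $C^1$ functional for which $0$ is a regular value, and then to invoke the Lagrange multiplier rule to show the constraint is natural. First I would make the constraint functional $N$ explicit: applying Lemma \ref{lm:2.3}$(ii)$ with $(\phi,\psi)=(u,v)$ in the case $i=0$ and using the symmetry of $I_0$ gives $\langle A_0^{\prime}(u,v),(u,v)\rangle = 4A_0(u,v)$, so that
$$N(u,v)=\big\langle \mathcal J^{\prime}(u,v),(u,v)\big\rangle = \|(u,v)\|_H^2 + A_0(u,v).$$
Since the embedding $X\hookrightarrow H$ is continuous the quadratic term is smooth on $X$, and $A_0\in C^1(X)$ by Lemma \ref{lm:2.3}$(ii)$; hence $N\in C^1(X)$. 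It therefore suffices to prove that $0$ is a regular value of $N$ along $\mathcal N$, i.e. that $N^{\prime}(u,v)\neq 0$ for every $(u,v)\in\mathcal N$.

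For the regular-value step I would differentiate the explicit expression for $N$ and pair once more with $(u,v)$: the quadratic part contributes $2\|(u,v)\|_H^2$, while $A_0$, being homogeneous of degree $4$ in the amplitude $(u,v)$, contributes $4A_0(u,v)$, so
$$\big\langle N^{\prime}(u,v),(u,v)\big\rangle = 2\|(u,v)\|_H^2 + 4A_0(u,v).$$
On $\mathcal N$ the defining relation $N(u,v)=0$ reads $A_0(u,v)=-\|(u,v)\|_H^2$, whence
$$\big\langle N^{\prime}(u,v),(u,v)\big\rangle = 2\|(u,v)\|_H^2 - 4\|(u,v)\|_H^2 = -2\|(u,v)\|_H^2.$$
By Lemma \ref{lm:2.6}$(ii)$ we have $\|(u,v)\|_H^2\geq\zeta^2>0$ on $\mathcal N$, so $\langle N^{\prime}(u,v),(u,v)\rangle\leq -2\zeta^2<0$. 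In particular $N^{\prime}(u,v)\neq 0$, so $0$ is a regular value and $\mathcal N$ is a $C^1$-manifold (of codimension one) in $X$.

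Finally, to see the constraint is natural, I would take a critical point $(u,v)$ of $\mathcal J|_{\mathcal N}$. Since $\mathcal N = N^{-1}(0)$ with $N^{\prime}(u,v)\neq 0$, the Lagrange multiplier rule yields $\Lambda\in\mathbb R$ with $\mathcal J^{\prime}(u,v)=\Lambda N^{\prime}(u,v)$ in $X^{\ast}$. Pairing with $(u,v)$ and using $\langle \mathcal J^{\prime}(u,v),(u,v)\rangle = N(u,v)=0$ gives $\Lambda\langle N^{\prime}(u,v),(u,v)\rangle = 0$; since $\langle N^{\prime}(u,v),(u,v)\rangle = -2\|(u,v)\|_H^2<0$, we conclude $\Lambda=0$, hence $\mathcal J^{\prime}(u,v)=0$ in $X$. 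I do not expect a genuine obstacle here: the only delicate point is the nonvanishing (indeed strict negativity) of $\langle N^{\prime}(u,v),(u,v)\rangle$, and this holds uniformly on $\mathcal N$ precisely because of the a priori lower bound $\|(u,v)\|_H\geq\zeta$ from Lemma \ref{lm:2.6}$(ii)$. This uniform transversality is exactly what upgrades $\mathcal N$ from a bare level set to a genuine $C^1$-manifold and simultaneously forces the multiplier to vanish.
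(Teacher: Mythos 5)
Your proof is correct and follows essentially the same route as the paper: both arguments rest on the identity $\langle N^{\prime}(u,v),(u,v)\rangle = 2\|(u,v)\|_H^2+4A_0(u,v)=-2\|(u,v)\|_H^2<0$ on $\mathcal N$ (yielding the regular-value/implicit-function-theorem conclusion) and on the same Lagrange multiplier computation forcing the multiplier to vanish. The only cosmetic difference is that you invoke Lemma \ref{lm:2.6}$(ii)$ for the uniform bound $\|(u,v)\|_H\geq\zeta$, whereas the paper re-derives a small-norm estimate to show $(0,0)\notin\partial\mathcal{N}$; both steps serve the same purpose of keeping $\mathcal N$ away from the origin.
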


\begin{proof} \ Following from Lemma \ref{lm:2.6}$(i)$, we have that $\mathcal{N} \neq \emptyset$.
Now, we divided our proof into two steps.

$(i)$:  By \eqref{E:2.4} and the Sobolev embedding inequality, we  find that, for any $r>0$ small enough,
\begin{align*}
N (u,v)
&=\|(u,v)\|^2_H + A_0 (u,v)\vspace{.1cm} \geq \|(u,v)\|^{2}_{H}-(\mu_{1}+\beta)\mathcal C_{0} \|u\|^{4}_{\frac{8}{3}}
-(\mu_{2}+\beta)\mathcal C_{0} \|v\|^{4}_{\frac{8}{3}} \vspace{.1cm} \\
&\geq \|(u,v)\|_{H}^{2}-C \|(u,v)\|_{H}^{4}>0, \ \ \forall \ \|(u,v)\|_{H}=r,
\end{align*}
which means that $(0,0)\notin \partial\mathcal{N}$.

On the other hand, we can obtain  that, for any $(u,v) \in \mathcal{N}$,
\begin{align}\label{E:2.14}
\langle N^{\prime}(u,v),(u,v)\rangle = 2\|(u,v)\|^2_H + 4 A_0 (u,v) = -2\|(u,v)\|^2_H< 0,
\end{align}
which, together with the  Implicit Function Theorem, implies that $\mathcal{N}$ is a $C^{1}$--manifold.

$(ii)$:  If $(u,v)$ is a critical point of $\mathcal{J}|_\mathcal{N}$, i.e. $(u,v)\in \mathcal{N}$ and $(\mathcal{J}|_\mathcal{N})^{\prime}(u,v)=(0,0)$.
Then there is a Lagrange multiplier $\gamma \in \mathbb R$ such that
\begin{align}\label{E:2.15}
\mathcal J^{\prime} (u,v) - \gamma  N^{\prime}(u,v)=(0,0).
\end{align}
Testing \eqref{E:2.15} with $(u,v)$, we get that
\begin{align}\label{E:2.16}
0=\langle \mathcal J^{\prime}(u,v),(u,v)\rangle - \gamma \langle N^{\prime}(u,v),(u,v)\rangle=-\gamma \langle N^{\prime}(u,v),(u,v)\rangle.
\end{align}
From \eqref{E:2.14} and \eqref{E:2.16}, we  get that  $\gamma = 0 $. Hence, $\mathcal J^{\prime} (u,v)=(0,0)$,
i.e. $(u,v)$ is  a critical point of $\mathcal{J}$ in $X$.
The proof of Lemma \ref{lm:2.7} is completed.
\end{proof}

\begin{lemma}\label{lm:2.8}  \ Assume that $\beta,\lambda_i, \mu_i >0 (i=1,2)$. If $\beta>\max\{\beta_1 ,\beta_2\}$, then we have
$$c<\min\{\mathcal J(u_{1},0),\mathcal J(0,u_{2})\},$$
where $\beta_1$ and $\beta_2$ are defined in \eqref{E:1.12},
and $u_{i}$ is the unique ground state solution of \eqref{E:1.6} with $(\lambda, \mu)=(\lambda_{i}, \mu_{i}) (i=1,2)$.
\end{lemma}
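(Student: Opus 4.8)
The plan is to show, for $\beta>\beta_1$, that there is a point of $\mathcal N$ whose energy lies strictly below $\mathcal J(u_1,0)$, and symmetrically for $\beta_2$ and $\mathcal J(0,u_2)$; combining the two cases then yields $c<\min\{\mathcal J(u_1,0),\mathcal J(0,u_2)\}$ whenever $\beta>\max\{\beta_1,\beta_2\}$. I would first record the two structural facts that make the computation transparent. On $\mathcal N$ one has $A_0(u,v)=-\|(u,v)\|_H^2$, hence $\mathcal J(u,v)=\tfrac14\|(u,v)\|_H^2$ (the identity already exploited in Lemma \ref{lm:2.6}$(iii)$). Moreover $(u_1,0),(0,u_2)\in\mathcal N$: testing the scalar equation \eqref{E:1.6} with $u_1$ gives the Nehari identity $\mu_1 I_0(u_1^2,u_1^2)=-\big(\|\nabla u_1\|_2^2+\lambda_1\|u_1\|_2^2\big)$, so that $A_0(u_1,0)=\mu_1 I_0(u_1^2,u_1^2)=-\|(u_1,0)\|_H^2$ and $\mathcal J(u_1,0)=\tfrac14\big(\|\nabla u_1\|_2^2+\lambda_1\|u_1\|_2^2\big)$; in particular $I_0(u_1^2,u_1^2)<0$. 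Since the scalar ground state decays exponentially, $u_1\in X_1$, so the test pairs below lie in $X$.

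First I would establish a clean projection bound by pure dilation rather than the two-parameter scaling of Lemma \ref{lm:2.6}. If $(u,v)\in X\setminus\{(0,0)\}$ satisfies $A_0(u,v)<0$, then $\tau\mapsto N(\tau u,\tau v)=\tau^2\|(u,v)\|_H^2+\tau^4 A_0(u,v)$ has the unique positive zero $\tau_0^2=-\|(u,v)\|_H^2/A_0(u,v)$, so $(\tau_0 u,\tau_0 v)\in\mathcal N$ and therefore
$$c\le \mathcal J(\tau_0 u,\tau_0 v)=-\frac{\|(u,v)\|_H^4}{4\,A_0(u,v)}.$$
Applying this with the test pair $(u_1,s u_1)$, $s>0$, and using $A_0(u_1,s u_1)=(\mu_1+2\beta s^2+\mu_2 s^4)\,I_0(u_1^2,u_1^2)<0$ together with the Nehari identity, I obtain, writing $P:=\|\nabla u_1\|_2^2+\lambda_1\|u_1\|_2^2$ and $Q:=\|\nabla u_1\|_2^2+\lambda_2\|u_1\|_2^2$,
$$c\le \frac{\big(P+s^2 Q\big)^2}{4P\big(1+\tfrac{2\beta}{\mu_1}s^2+\tfrac{\mu_2}{\mu_1}s^4\big)}=:\Psi(s),\qquad \Psi(0)=\tfrac{P}{4}=\mathcal J(u_1,0).$$
The desired strict inequality $\Psi(s)<P/4$ is, after clearing the positive denominator, equivalent to $2P\big(\tfrac{\beta P}{\mu_1}-Q\big)s^2+\big(\tfrac{\mu_2 P^2}{\mu_1}-Q^2\big)s^4>0$, whose leading coefficient is positive exactly when $\beta>\mu_1 Q/P=\beta_1$. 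Hence for $\beta>\beta_1$ there is $s_0>0$ with $\Psi(s)<\mathcal J(u_1,0)$ for all $0<s<s_0$, giving $c<\mathcal J(u_1,0)$.

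The case of $(0,u_2)$ is identical after interchanging the roles of the two components: testing with $(s u_2,u_2)$ and the analogous Nehari identity for $u_2$ produce the threshold $\beta_2$, so $\beta>\beta_2$ forces $c<\mathcal J(0,u_2)$. Taking $\beta>\max\{\beta_1,\beta_2\}$ concludes the proof. I expect the only genuinely delicate point to be the sign bookkeeping that pins the threshold down to precisely $\beta_1$ (respectively $\beta_2$): one must invoke the scalar Nehari identity to eliminate $I_0(u_1^2,u_1^2)$ in favour of $P$, and then observe that the $O(s^2)$ coefficient—rather than the $O(s^4)$ correction—controls the sign for small $s$. Everything else (existence of the dilation root, membership of the test pairs in $X$, and the elementary algebra) is routine.
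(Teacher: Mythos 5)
Your proof is correct and follows essentially the same route as the paper: both project the one-parameter family $(u_1, s u_1)$ (resp. $(s u_2, u_2)$) onto $\mathcal N$ by pure scalar multiplication, use the scalar Nehari identity $\mu_1 I_0(u_1^2,u_1^2)=-\big(\|\nabla u_1\|_2^2+\lambda_1\|u_1\|_2^2\big)$ to write the projected energy as $\frac{\mu_1(b_1+s^2 b_2)^2}{4 b_1(\mu_1+2\beta s^2+\mu_2 s^4)}$, and show it drops strictly below $\mathcal J(u_1,0)=b_1/4$ for small $s>0$ exactly when $\beta>\beta_1$. The only (immaterial) differences are that the paper first reduces to one case via the assumption $\mathcal J(u_1,0)\leq \mathcal J(0,u_2)$ and detects the decrease through the sign of $h'(\rho)$ as $\rho\to 0^+$, whereas you treat both cases symmetrically and detect it by clearing denominators and comparing the $s^2$ coefficients.
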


\begin{proof} \ Without loss of generality, we may assume that $\mathcal J(u_1,0)\leq \mathcal J(0,u_2)$. For any $\rho\geq 0$, let
\begin{align}\label{E:2.17}
F_{\rho}(t):=N(tu_{1},t \rho u_{1})=t^{2}\|(u_1,\rho u_1)\|_{H}^2 +t^{4}( \mu_1 +\rho^4\mu_2 +2\rho^2\beta )I_0 (u_1^2,u_1^2).
\end{align}
Since $u_1$ is the unique ground state solution of \eqref{E:1.6} with $(\lambda, \mu)=(\lambda_{1}, \mu_{1})$, one has that
\begin{align}\label{E:2.18}
\|\nabla u_{1}\|_{2}^{2}+\lambda_{1}\|u_{1}\|_{2}^{2}+\mu_{1}I_0 (u_1^2,u_1^2)&=0,
\end{align}
which means that $I_0 (u_1^2,u_1^2)<0$. So  we can see that
\begin{align}\label{E:2.19}
t_{\rho}=\left(\frac{\|(u_1,\rho u_1)\|_{H}^2}{(\mu_1 +\rho^4\mu_2 +2\rho^2\beta )\left(-I_0 (u_1^2,u_1^2)\right)}\right)^\frac{1}{2}
\end{align}
is the unique positive root of $F_{\rho}(t)=0$, which implies that $(t_{\rho} u_1 ,t_{\rho} \rho u_1) \in \mathcal N$ for any $\rho\geq 0$. By \eqref{E:2.18} and \eqref{E:2.19}, we can find that, for any $\rho\geq 0$,
\begin{align*}
h(\rho):= \mathcal J(t_{\rho} u_1,t_{\rho} \rho u_1)
&=\frac{t_{\rho}^2}{2}\|(u_1,\rho u_1)\|_{H}^2+\frac{t_{\rho}^4}{4}\big(\mu_1 +\rho^4 \mu_2 +2\rho^2 \beta \big)I_0 (u_1^2,u_1^2)  \vspace{.1cm}\\
&= \frac{\|(u_1,\rho u_1)\|_{H}^4}{4(\mu_1 +\rho^4 \mu_2 +2\rho^2 \beta)\left(-I_0 (u_1^2,u_1^2)\right)}\\
&= \frac{\mu_1 \bigg(\|\nabla u_1 \|_2^2+\lambda_1 \|u_1\|_2^2+\rho^2 \big(\|\nabla u_1 \|_2^2+\lambda_2\|u_1\|_2^2 \big)\bigg)^2  }{4(\mu_1 +\rho^4 \mu_2 +2\rho^2 \beta)\big(\|\nabla u_1 \|_2^2 +\lambda_1 \|u_{1}\|_2^2\big)}\\
&= \frac{\mu_1 (b_1+\rho^2 b_2)^2  }{4(\mu_1 +\rho^4 \mu_2 +2\rho^2 \beta )b_1},
\end{align*}
where $b_i : =\|\nabla u_1 \|_2^2 +\lambda_i \|u_{1}\|_2^2 \ (i=1,2)$.
Since $\beta>\beta_1$, we have $ b_2 \mu_1 - b_1 \beta <0$, which, together with the derivative of $h(\rho)$,  gives that
\begin{align}\label{E:2.20}
h^{\prime}(\rho)&=\frac{\mu_1 \bigg[ 4\big(b_1 + \rho^2 b_2  \big) \rho b_2  \big(\mu_1 +\rho^4 \mu_2 +2\rho^2 \beta \big) - \big(4 \rho^3 \mu_2 + 4 \rho \beta \big)\big(b_1 + \rho^2 b_2 \big)^2 \bigg] }{4 b_1 \big(\mu_1 +\rho^4 \mu_2 +2\rho^2 \beta \big)^2} \nonumber\\
&=\frac{\mu_1 \bigg[ \big( 4b_1 b_2 \mu_1 - 4 b_1^2 \beta  \big) \rho + o(\rho^2) \bigg] }{4 b_1 \big(\mu_1 +\rho^4 \mu_2 +2\rho^2 \beta \big)^2} \to 0^-, ~~~~~~~~~~ \mbox{as}~\rho \to 0^+.
\end{align}
So there exists $\rho_1>0$ such that
\begin{align}\label{E:2.21}
c\leq \mathcal J(t_{\rho_1} u_1 ,t_{\rho_1} \rho _1 u_1)=h(\rho_1)
&<h(0)=\frac{1}{4} b_1 =\frac{1}{4} \big( \|\nabla u_{1}\|_{2}^{2}+\lambda_{1}\|u_{1}\|_{2}^{2} \big) \nonumber\\
&=\frac{1}{2} \big( \|\nabla u_{1}\|_{2}^{2}+\lambda_{1}\|u_{1}\|_{2}^{2} \big) + \frac{1}{4}\mu_{1}I_0 (u_1^2,u_1^2) \nonumber\\
&=\mathcal J(u_{1},0)=\min\{\mathcal J(u_{1},0),\mathcal J(0,u_{2})\}.
\end{align}
We complete the proof.
\end{proof}

\section{The  Proof of Theorem \ref{th:1.2}}\label{sc3}

\begin{proof}[\textbf{Proof of Theorem \ref{th:1.2}:}]  Assume that   $\{(u_{n},v_{n})\}\subset \mathcal N$ such that
$\mathcal J(u_n,v_n)\to c$ as $n\to\infty$.
It is easy to see that
\begin{align*}
\aligned
c+o(1)&=\mathcal J(u_{n},v_{n})-\frac{1}{4} N(u_n,v_n)  = \frac{1}{4} \|(u_n,v_n)\|_{H}^2,
\endaligned
\end{align*}
which tells us  that  $\{(u_{n},v_{n})\}$ is bounded in $H$.
Using \eqref{E:2.7}, we have $ \|u\|_{\frac{8}{3}}^{4} \leq C \|u\|_{2}^{3}\|\nabla u\|_{2}$ for any $u\in H^1(\mathbb R^2)$, which, together with  \eqref{E:2.4}, means that
\begin{align}\label{E:3.1}
\aligned
0 \leq A_{2}(u_n,v_n) &\leq \mathcal C_{0}(\mu_{1}+\beta) \|u_n\|^{4}_{\frac{8}{3}}+\mathcal C_{0}(\mu_{2}+\beta)\|v_n\|^{4}_{\frac{8}{3}} \vspace{.1cm}\\
& \leq C_1 \big( \|u_n\|_{2}^{3}\|\nabla u_n\|_{2}+ \|v_n\|_{2}^{3}\|\nabla v_n\|_{2} \big) \vspace{.1cm}\\
&\leq C.
\endaligned
\end{align}
So we can get that
\begin{align*}
c+o(1)= \mathcal J (u_n, v_n)&=\frac{1}{2} \|(u_n, v_n)\|_{H}^2 + \frac{1}{4} A_0(u_n, v_n) \\
&\geq \frac{1}{4} A_1 (u_n, v_n)-\frac{1}{4} A_2 (u_n, v_n) \\
&\geq \frac{1}{4} \big( \mu_1I_{1}(u^2_n,u^2_n)+ \mu_2I_{1}(v^2_n,v^2_n)+ 2\beta I_{1}(u^2_n,v^2_n) \big)-C ,
\end{align*}
which implies that $\{I_{1}(u^2_n,v^2_n)\}$ is bounded. Since $\{I_{1}(u^2_n,v^2_n)\}$ and $\{\|(u_{n},v_{n})\|_H^2\}$ are bounded  in $\mathbb R$,
it follows from Lemma \ref{lm:2.2}  that $\{ \|u_n\|_{\ast}^2 + \|v_n\|_{\ast}^2 \}$ is bounded. So $\{(u_{n},v_{n})\} $ is bounded in $X$.
Passing to a subsequence, one has that
\begin{align} \label{E:3.2}
\aligned
&(u_{n} , v_{n}) \rightharpoonup (u_0  , v_0 )\ \ \mbox{in}\ X  ,  \vspace{.1cm}\\
&(u_{n} , v_{n}) \to (u_0  , v_0 )  \ \ \mbox{in}\ L^{p}(\mathbb R^{2})\times L^{p}(\mathbb R^{2}) \ \ \mbox{for} \ p \in [2,\infty),  \vspace{.1cm}\\
&(u_{n} , v_{n}) \to (u_0  , v_0 )  \ \ \mbox{a.e.} \ \ \mbox{on}\ \ \mathbb R^{2}.
\endaligned
\end{align}
By the definition of $A_0, A_1, A_2$  and \eqref{E:3.1}, we find
\begin{align*}
\aligned
c+o(1)&= \mathcal J (u_{n},v_{n}) -\frac{1}{2} N(u_n,v_n) =-\frac{1}{4} A_0(u_n,v_n) \vspace{.1cm}\\
&\leq \frac{1}{4} A_2(u_n,v_n) \leq C_1 \big( \|u_{n}\|_{2}^{3}\|\nabla u_{n}\|_{2}+ \|v_{n}\|_{2}^{3}\|\nabla v_{n}\|_{2} \big) \\
&\leq C (\|u_{n}\|_{2}+\|v_{n}\|_{2})^3 (\|\nabla u_{n}\|_{2}+\|\nabla v_{n}\|_{2})\\
&\leq C(\|u_{n}\|_{2}+\|v_{n}\|_{2})^3,
\endaligned
\end{align*}
which shows that $\|u_{n}\|_{2}+\|v_{n}\|_{2} > 0$.  Using \eqref{E:3.2}, we get that $(u_0 , v_0 )\neq (0,0)$.
Passing to $(|u_n|,|v_n|)$, we may assume that $(u_0,v_0)$ is nonnegative.

By the weak lower semicontinuity of norm and $\mathcal J$ in $X$,
we can conclude that $N(u_0, v_0) \leq \liminf\limits_{n\to \infty} N(u_n , v_n) =0$,
which, together with $\lim\limits_{t\to 0^+}N(t^2 u_0(t x) , t^2 v_0(t x))=0^+$, implies that  there exists $0<t_0 \leq 1$
such that $(t_0^2 u_0(t_0 x) , t_0^2 v_0(t_0 x))\in \mathcal N$.
Therefore,  we find that
\begin{align*}
c&=\lim_{n\to \infty}\mathcal J(u_n , v_n )\\
&=\lim_{n\to \infty}\big[ \mathcal J(u_n , v_n )-\frac{1}{4} N(u_n , v_n) \big]  \vspace{.1cm}\\
&=\lim_{n\to \infty} \frac{1}{4} \|(u_n , v_n)\|_H^2 \\
&\geq \frac{1}{4} \|(u_0 , v_0 )\|_H^2 \\
&\geq \frac{t_0^4}{4} (\|\nabla u_0\|_2^2+\|\nabla v_0\|_2^2)   +  \frac{t_0^2}{4} (\lambda_1 \|u_0\|_2^2+\lambda_2 \|v_0\|_2^2) \\
&=\frac{1}{4} \|(t_0^2 u_0(t_0 x) , t_0^2 v_0(t_0 x) )\|_H^2 \\
&=\mathcal J(t_0^2 u_0(t_0 x) , t_0^2 v_0(t_0 x) ) \geq c,
\end{align*}
which gives that $t_0=1$. So $\mathcal J(u_0, v_0)=c$ and $(u_0, v_0) \in \mathcal N $, i.e. $c$ is achieved by $(u_0,v_0)$.
 From Lemma \ref{lm:2.8}, we get that
$$J(u_0 , v_0)=c<\min{\{ \mathcal J(u_{1},0), \mathcal J(0,u_{2})\}}.$$
Using  Lemmas \ref{lm:2.5} and \ref{lm:2.7}, we find that    $(u_0 , v_0)\in   C ^{2} (\mathbb R^{2})\times  C ^{2} (\mathbb R^{2})$
is a nonnegative nontrivial ground state solution of \eqref{E:1.5}.
Combining   the following decomposition
$$\int_{\mathbb R^{2}} \ln (|x-y|) u^{2}(y) \mathrm{d}y =\int_{\mathbb R^{2}} \ln(1+|x-y|) u^{2}(y) \mathrm{d}y - \int_{\mathbb R^{2}} \ln(1+\frac{1}{|x-y|}) u^{2}(y) \mathrm{d}y, \ \ \forall u \in X_1$$
and the strong maximum principle, we  have that $(u_0 , v_0)$ is  a positive ground state solution of \eqref{E:1.5}.
The proof of Theorem \ref{th:1.2} is completed.
\end{proof}

\section{The Proof of Theorem \ref{th:1.1}}\label{sc4}
In this section, for any $t\in\mathbb R$, we let
$$ H_t :=\{ x=(x_1, x_2)\in\mathbb R^2 : x_1 > t \},$$
$$ T_t := \{ x=(x_1, x_2)\in\mathbb R^2 : x_1 = t \},$$
and
$$x^t:=(2t-x_1,x_2),~~~\forall x=(x_1,x_2)\in\mathbb R^2.$$
Assume that $(u,v)$ is  a fixed positive solution of \eqref{E:1.5} in $X$  and  set
$$u^t(x):=u(x^t), \ v^t(x):=v(x^t), \ w_u^t(x):=w_u(x^t), \ w_v^t(x):=w_v(x^t), \ \  \ x\in\mathbb R^2,\ t \in \mathbb R, $$
and
$$u_t:=u^t-u, \ v_t:=v^t-v, \ w_{u,t}:= w_u^t-w_u , \  w_{v,t}:= w_v^t-w_v \ \mbox{in}\ H_t,$$
where $w_{u},w_{v}$ are defined in Lemma \ref{lm:2.4}.
By direct computation, we have that
\begin{align}\label{E:4.1}
\left\{ \displaystyle\begin{array}{ll} \displaystyle
-\Delta u_t + \lambda_1 u_t
=\mu_1 w_{u,t}u^t+ \mu_1 w_{u}u_t+\beta w_{v,t}u^t+\beta w_{v}u_t,
&\mbox{in}\ H_t, \vspace{.2cm}\\ \displaystyle
-\Delta v_t+ \lambda_2 v_t
=\mu_2 w_{v,t}v^t+ \mu_2 w_{v}v_t+\beta w_{u,t}v^t+\beta w_{u}v_t,
&\mbox{in}\ H_t, \vspace{.2cm}\\ \displaystyle
-\Delta w_{u,t} = (u^t)^2- u^2 =(u^t+u)u_t,
&\mbox{in}\ H_t, \vspace{.2cm}\\ \displaystyle
-\Delta w_{v,t} =(v^t)^2- v^2  =(v^t+v)v_t,
&\mbox{in}\ H_t.
\end{array}\right.
\end{align}

\begin{lemma}\label{lm:4.1}
Assume that  $(u, v)$ is a positive solution of \eqref{E:1.5} and $w_{u},w_{v}$ are defined in Lemma \ref{lm:2.4}. Then we have,
\begin{align}\label{E:4.2}
\aligned
w_{u,t}(x)=w_u^t-w_u=\int_{H_t} \frac{1}{2\pi} \ln\left(\frac{|x-y^t|}{|x-y|}\right) \left( u^t(y)+u(y) \right)u_t(y)  \mathrm{d}y, ~~~ x\in\mathbb R^2,  \\
w_{v,t}(x)=w_v^t-w_v=\int_{H_t} \frac{1}{2\pi} \ln\left(\frac{|x-y^t|}{|x-y|}\right) \left( v^t(y)+v(y) \right)v_t(y)  \mathrm{d}y, ~~~ x\in\mathbb R^2 .
\endaligned
\end{align}
\end{lemma}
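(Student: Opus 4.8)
The plan is to express everything through the single kernel identity
$$w_u^t(x)=-\frac{1}{2\pi}\int_{\mathbb R^2}\ln(|x-y|)\big(u^t(y)\big)^2\,\mathrm{d}y,$$
and then subtract $w_u$ and exploit the reflection symmetry of the domain. First I would record that the reflection $y\mapsto y^t$ is a measure-preserving involution and an isometry, so that $|x^t-y^t|=|x-y|$. Applying the change of variables $y\mapsto y^t$ in the defining integral $w_u(x^t)=-\frac{1}{2\pi}\int_{\mathbb R^2}\ln(|x^t-y|)u^2(y)\,\mathrm{d}y$ and using $u(y^t)=u^t(y)$ yields the displayed identity for $w_u^t$. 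Since $(u,v)$ is a positive solution, Lemma \ref{lm:2.5} guarantees that $u^t$ and $u$ decay exponentially, so the kernel $\ln(|x-y|)$, which has only an integrable singularity at $y=x$ and logarithmic growth at infinity, is integrated against an exponentially decaying weight; hence all the integrals below converge absolutely and may be split over subdomains and recombined freely.

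Next I would subtract to obtain
$$w_{u,t}(x)=w_u^t(x)-w_u(x)=-\frac{1}{2\pi}\int_{\mathbb R^2}\ln(|x-y|)\big[(u^t(y))^2-u^2(y)\big]\,\mathrm{d}y,$$
and split the domain as $\mathbb R^2=H_t\cup H_t^c$, discarding the null set $T_t$. On the part over $H_t^c$ I would again substitute $y\mapsto y^t$, which maps $H_t^c$ onto $H_t$; because the reflection is an involution one has $u^t(y^t)=u(y)$ and $u(y^t)=u^t(y)$, so the bracket changes sign, $(u^t(y^t))^2-u^2(y^t)=-[(u^t(y))^2-u^2(y)]$, while $|x-y|$ becomes $|x-y^t|$.

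Recombining the two contributions over $H_t$ then gives
$$w_{u,t}(x)=\frac{1}{2\pi}\int_{H_t}\Big(\ln|x-y^t|-\ln|x-y|\Big)\big[(u^t(y))^2-u^2(y)\big]\,\mathrm{d}y,$$
and factoring $(u^t)^2-u^2=(u^t+u)(u^t-u)=(u^t+u)u_t$ produces exactly \eqref{E:4.2}. The identity for $w_{v,t}$ follows by the same argument with $u$ replaced by $v$. The only delicate point is the bookkeeping of the reflection substitution, in particular the sign flip of the bracket under $y\mapsto y^t$ together with the fact that the separate half-plane integrals are absolutely convergent; both are controlled by the exponential decay from Lemma \ref{lm:2.5}.
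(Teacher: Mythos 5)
Your proof is correct and follows essentially the same route as the paper's: both exploit that the reflection $y\mapsto y^t$ is a measure-preserving isometry (so $|x^t-y^t|=|x-y|$ and $|x^t-y|=|x-y^t|$), split the integrals over $H_t$ and its complement, fold the complementary piece back onto $H_t$, and factor $(u^t)^2-u^2=(u^t+u)u_t$ to arrive at \eqref{E:4.2}. The only difference is the order of operations (you perform one substitution to rewrite $w_u^t$ before splitting, while the paper splits both integrals first and substitutes afterwards), plus your explicit justification of absolute convergence via the exponential decay from Lemma \ref{lm:2.5}, which the paper leaves implicit.
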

\begin{proof}
Since $|x^t-y^t|=|x-y|$ and $|x^t-y|=|x-y^t|$, we find that
\begin{align*}
w_{u,t}(x)&=w_u^t-w_u=-\int_{\mathbb R^2}\frac{1}{2\pi}\ln (|x^t-y|) u^2(y) \mathrm{d}y  +  \int_{\mathbb R^2}\frac{1}{2\pi}\ln (|x-y|) u^2(y) \mathrm{d}y \nonumber\\
&=      -\int_{H_t}\frac{1}{2\pi}\ln (|x^t-y|) u^2(y) \mathrm{d}y -  \int_{\mathbb R^2 \backslash H_t }\frac{1}{2\pi}\ln (|x^t-y|) u^2(y) \mathrm{d}y \nonumber\\
&\quad\, +  \int_{H_t}\frac{1}{2\pi}\ln (|x-y|) u^2(y) \mathrm{d}y +  \int_{\mathbb R^2 \backslash H_t }\frac{1}{2\pi}\ln (|x-y|) u^2(y)    \mathrm{d}y \nonumber\\
&=      -\int_{H_t}\frac{1}{2\pi}\ln (|x-y^t|) u^2(y) \mathrm{d}y -  \int_{ H_t }\frac{1}{2\pi}\ln (|x^t-y^t|) u^2(y^t) \mathrm{d}y \nonumber\\
&\quad\, +  \int_{H_t}\frac{1}{2\pi}\ln (|x-y|) u^2(y) \mathrm{d}y +  \int_{ H_t }\frac{1}{2\pi}\ln (|x-y^t|) u^2(y^t)    \mathrm{d}y \nonumber\\
&=\int_{H_t}\frac{1}{2\pi}\left( \ln(|x-y^t|)-\ln(|x-y|) \right) \left(u^2(y^t)-u^2(y) \right) \mathrm{d}y \nonumber\\
&=\int_{H_t}\frac{1}{2\pi}\ln \left(\frac{|x-y^t|}{|x-y|}\right) \left( u^t(y)+u(y) \right)u_t(y)  \mathrm{d}y.
\end{align*}
Similarly, one has that
$$\displaystyle w_{v,t}(x)=\int_{H_t} \frac{1}{2\pi} \ln\left(\frac{|x-y^t|}{|x-y|}\right) \left( v^t(y)+v(y) \right)v_t(y)  \mathrm{d}y.$$
We complete the proof.
\end{proof}

\begin{lemma}\label{lm:4.2}(Lemma 6.2 of \cite{CW})
There exists a constant $k>0$ such that
\begin{align}\label{E:4.3}
\|w_{u,t}^-\|_{L^2(H_t)} \leq k c_{u,t} \|u_t^-\|_{L^2(H_t)}, ~~~ \mbox{for}~ \mbox{every} ~ t\in\mathbb R,
\end{align}
where
\begin{align}\label{E:4.4}
c_{u,t}= \left( \int_{H_t^u} (y_1-t)^2 u^2(y) \mathrm{d}y \right)^\frac{1}{2},   ~~~  H_t^u:=\{ x\in H_t : u_t(x)<0 \},
\end{align}
and  $h^-:=\min \{ h,0 \}$ for any $h\in X_1$.
\end{lemma}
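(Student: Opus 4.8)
The plan is to start from the representation formula of Lemma \ref{lm:4.1} and exploit the sign of the kernel. First I would record the reflection identity: for $x,y\in H_t$ one computes $|x-y^t|^2-|x-y|^2=4(x_1-t)(y_1-t)>0$, so that $\ln\!\frac{|x-y^t|}{|x-y|}\ge 0$ on $H_t\times H_t$. Since $(u,v)$ is positive we have $u^t+u>0$, and hence in the integrand of \eqref{E:4.2} the sign is governed entirely by $u_t(y)$. Splitting $u_t=u_t^++u_t^-$ and discarding the nonnegative contribution (which only makes $w_{u,t}$ larger) gives
\[
w_{u,t}(x)\ge \int_{H_t}\frac{1}{2\pi}\ln\!\left(\frac{|x-y^t|}{|x-y|}\right)\big(u^t+u\big)(y)\,u_t^-(y)\,\mathrm{d}y,
\]
and since both sides are compared with $0$ this yields the one-sided bound
\[
0\le -w_{u,t}^-(x)\le \frac{1}{2\pi}\int_{H_t^u}\ln\!\left(\frac{|x-y^t|}{|x-y|}\right)\big(u^t+u\big)(y)\,|u_t^-(y)|\,\mathrm{d}y,
\]
the integral now running over $H_t^u=\{u_t<0\}$ only.

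Next I would make the kernel explicit. On $H_t^u$ we have $u^t<u$, so $u^t+u\le 2u$ there. For the logarithm, two estimates are available: using $\ln(1+s)\le s$ with the reflection identity gives $\ln\!\frac{|x-y^t|}{|x-y|}=\tfrac12\ln\!\big(1+\tfrac{4(x_1-t)(y_1-t)}{|x-y|^2}\big)\le \tfrac{2(x_1-t)(y_1-t)}{|x-y|^2}$, while the triangle inequality $|x-y^t|-|x-y|\le|y^t-y|=2(y_1-t)$ gives the cleaner one-weight bound $\ln\!\frac{|x-y^t|}{|x-y|}\le \tfrac{2(y_1-t)}{|x-y|}$. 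Using the latter, I obtain the pointwise estimate
\[
0\le -w_{u,t}^-(x)\le \frac{2}{\pi}\int_{H_t^u}\frac{(y_1-t)\,u(y)\,|u_t^-(y)|}{|x-y|}\,\mathrm{d}y,
\]
in which the factor $(y_1-t)u(y)$ is tailor-made to reconstruct the weight $c_{u,t}=\|(y_1-t)u\|_{L^2(H_t^u)}$.

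Finally I would take the $L^2(H_t)$ norm. Setting $g(y):=(y_1-t)u(y)\,|u_t^-(y)|$ on $H_t^u$ and $0$ elsewhere, the right-hand side above is a convolution-type integral $\tfrac{2}{\pi}\int_{H_t}|x-y|^{-1}g(y)\,\mathrm{d}y$, and the Cauchy--Schwarz inequality gives $\|g\|_{L^1}\le \|(y_1-t)u\|_{L^2(H_t^u)}\,\|u_t^-\|_{L^2(H_t)}=c_{u,t}\,\|u_t^-\|_{L^2(H_t)}$, which is exactly the target product. What remains is to bound the map $g\mapsto w_{u,t}^-$ from (essentially) $L^1$ into $L^2(H_t)$ with a constant independent of $t$; this is a Hardy--Littlewood--Sobolev / weak-Young estimate for the singular kernel, of the same flavor used in \eqref{E:2.2}. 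I expect this last step to be the main obstacle: the planar kernel is borderline, since neither $|x-y|^{-1}$ nor $|x-y|^{-2}$ maps $L^1$ into $L^2(\mathbb R^2)$, so a naive power-kernel estimate fails at the endpoint. The estimate must instead use that the true kernel $\ln\!\frac{|x-y^t|}{|x-y|}$ has only a \emph{logarithmic} (hence locally $L^p$ for every $p$) singularity along the diagonal, together with the decay of the fixed solution $u$ away from it (guaranteed by Lemma \ref{lm:2.5}), splitting the integral into a near-diagonal piece handled by the integrable log and a far piece controlled by the exponential decay of $u$. Carrying out this splitting so that the resulting constant $k$ is uniform in $t$ is precisely the content of Lemma 6.2 of \cite{CW}, whose argument I would follow.
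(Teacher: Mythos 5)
First, a point of comparison: the paper itself offers no proof of this lemma at all --- it is quoted verbatim from Lemma 6.2 of \cite{CW} --- and your attempt likewise ends by deferring to that same reference, so judged as a self-contained proof it must stand on its own, and it does not. Your preparatory steps are all correct and are the natural ones: the reflection identity $|x-y^t|^2-|x-y|^2=4(x_1-t)(y_1-t)$, the nonnegativity of the kernel on $H_t\times H_t$, the one-sided bound obtained from \eqref{E:4.2} by discarding the $u_t^+$ contribution and using $u^t+u\le 2u$ on $H_t^u$, both kernel estimates, and the Cauchy--Schwarz step giving $\|(y_1-t)u\,|u_t^-|\|_{L^1(H_t^u)}\le c_{u,t}\|u_t^-\|_{L^2(H_t)}$. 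You are also right that the endpoint bound (mapping $L^1$ into $L^2$ against the kernel $|x-y|^{-1}$ in the plane) fails. But that is precisely where the one real idea of the proof has to appear, and your attempt stops there; worse, the repair you sketch cannot succeed. The obstruction is \emph{not} the diagonal singularity (a logarithmic singularity is locally $L^p$ for every $p$, as you say), but the far field in the $x$ variable: for $y$ in a fixed bounded set and $|x|\to\infty$, one has $\ln\frac{|x-y^t|}{|x-y|}=\frac12\ln\left(1+\frac{4(x_1-t)(y_1-t)}{|x-y|^2}\right)\sim\frac{2(x_1-t)(y_1-t)}{|x|^2}$, and $(x_1-t)/|x|^2\notin L^2(H_t)$, since its square integrates like $\int \mathrm{d}r/r$ over any cone contained in $H_t$. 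Consequently your majorant $F(x):=\frac1\pi\int_{H_t^u}\ln\left(\frac{|x-y^t|}{|x-y|}\right)u(y)|u_t^-(y)|\,\mathrm{d}y$ satisfies $F(x)\sim\frac{2(x_1-t)}{\pi|x|^2}\int_{H_t^u}(y_1-t)u|u_t^-|\,\mathrm{d}y$ at infinity, so $\|F\|_{L^2(H_t)}=+\infty$ whenever $u_t^-\not\equiv 0$. The exponential decay of $u$ acts only in the $y$ variable and cannot cure a divergence of the $x$-integration; no near/far splitting recovers \eqref{E:4.3} from this majorant.

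In fact the difficulty is not just with your majorant: the inequality \eqref{E:4.3} cannot hold literally for \emph{every} $t\in\mathbb R$. Normalize coordinates so that $\int_{\mathbb R^2}y_1u^2\,\mathrm{d}y=0$ and set $M_t:=\int_{H_t}(y_1-t)\big((u^t)^2-u^2\big)\,\mathrm{d}y$; reflecting the first term gives the identity $M_t=t\|u\|_2^2$. If $t<0$ then $M_t<0$, which already forces $u_t^-\not\equiv 0$ (otherwise the integrand would be nonnegative), so the right-hand side of \eqref{E:4.3} is finite and positive; on the other hand, using the exponential decay of $u$ (Lemma \ref{lm:2.5}) one gets from Lemma \ref{lm:4.1} the expansion $w_{u,t}(x)=\frac{x_1-t}{\pi|x|^2}\big(M_t+o(1)\big)+O\!\left(e^{-c\sqrt{|x|}}\right)$ as $|x|\to\infty$, so $w_{u,t}$ is negative on a cone near infinity with $|w_{u,t}(x)|\ge c\,(x_1-t)/|x|^2$ there, whence $\|w_{u,t}^-\|_{L^2(H_t)}=+\infty$. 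So any complete argument must either restrict to the regime $M_t\ge 0$ (which covers every $t$ for which the lemma is actually invoked in Lemmas \ref{lm:4.3} and \ref{lm:4.5}), or localize the norm on the left, or prove instead the bilinear estimate that the applications really use, namely a bound of the form $\int_{H_t}(-w_{u,t}^-)\,u^t\,|u_t^-|\,\mathrm{d}x\le C\,\|(y_1-t)u\|_{L^4(H_t^u)}\,\|u\|_{4}\,\|u_t^-\|_{L^2(H_t)}^2$, which \emph{does} follow from your ingredients: your kernel bound $\ln\frac{|x-y^t|}{|x-y|}\le\frac{2(y_1-t)}{|x-y|}$, the Hardy--Littlewood--Sobolev inequality as in \eqref{E:2.2}, and H\"older, since in the bilinear form both slots of the kernel carry exponentially decaying weights. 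In short: your setup is sound and your diagnosis that the endpoint estimate fails is correct, but the decisive step is missing, and in the form you (and the statement itself) set it up, it cannot be carried out.
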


\begin{lemma}\label{lm:4.3}
Assume that $\beta,\lambda_i,\mu_i > 0(i=1,2)$. There exists $T>0$ such that, when $t\geq T$,    $u_t , v_t \geq 0$ in $H_t$.
\end{lemma}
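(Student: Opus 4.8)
The plan is to carry out the standard energy estimate that \emph{starts} the moving plane method: I will show that the negative parts $u_t^-:=\min\{u_t,0\}$ and $v_t^-:=\min\{v_t,0\}$ vanish identically in $H_t$ once $t$ is large, which is exactly the assertion $u_t,v_t\ge 0$. First I would record two facts that make the far-out half-plane favorable. By Lemma \ref{lm:2.4} one has $w_u(x)+\frac{1}{2\pi}\|u\|_2^2\ln(|x|)\to 0$ and the analogous statement for $w_v$; since $\|u\|_2,\|v\|_2>0$, there is $R_0>0$ with $w_u,w_v<0$ on $\{|x|\ge R_0\}$, and therefore $w_u,w_v<0$ throughout $H_t$ whenever $t\ge R_0$ (because $|x|\ge x_1>t$ on $H_t$). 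Second, the exponential decay supplied by Lemma \ref{lm:2.5} guarantees that the weights $c_{u,t},c_{v,t}$ of Lemma \ref{lm:4.2} satisfy $c_{u,t},c_{v,t}\to 0$ as $t\to+\infty$.

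Next I would test the first equation of \eqref{E:4.1} with $u_t^-$ and the second with $v_t^-$, integrating over $H_t$. These test functions are admissible since $u_t=v_t=0$ on $T_t$ (there $x^t=x$) and both decay at infinity, so no boundary term arises and $\int_{H_t}\nabla u_t\cdot\nabla u_t^-=\|\nabla u_t^-\|_{L^2(H_t)}^2$. Using $u_t u_t^-=(u_t^-)^2$, the diagonal terms $\mu_1\int_{H_t}w_u(u_t^-)^2$ and $\beta\int_{H_t}w_v(u_t^-)^2$ are nonpositive for $t\ge R_0$ and may be moved to the left, leaving
\[
\|\nabla u_t^-\|_{L^2(H_t)}^2+\la_1\|u_t^-\|_{L^2(H_t)}^2\le \mu_1\int_{H_t}w_{u,t}\,u^t u_t^-+\be\int_{H_t}w_{v,t}\,u^t u_t^-,
\]
together with the symmetric inequality in which $u,v$ are interchanged. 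Because $u^t>0$ and $u_t^-\le 0$, only the negative parts $w_{u,t}^-,w_{v,t}^-$ contribute positively, so by Cauchy–Schwarz and the uniform bounds $\|u\|_\infty,\|v\|_\infty<\infty$ from Lemma \ref{lm:2.5},
\[
\mu_1\int_{H_t}w_{u,t}\,u^t u_t^-\le \mu_1\|u\|_\infty\,\|w_{u,t}^-\|_{L^2(H_t)}\|u_t^-\|_{L^2(H_t)},
\]
and likewise for the cross term involving $w_{v,t}^-$.

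Then I would invoke Lemma \ref{lm:4.2} to replace $\|w_{u,t}^-\|_{L^2(H_t)}$ by $kc_{u,t}\|u_t^-\|_{L^2(H_t)}$ and $\|w_{v,t}^-\|_{L^2(H_t)}$ by $kc_{v,t}\|v_t^-\|_{L^2(H_t)}$. Writing $a:=\|u_t^-\|_{L^2(H_t)}$ and $b:=\|v_t^-\|_{L^2(H_t)}$, this produces a coupled system of the form
\[
\la_1 a^2\le C\,c_{u,t}\,a^2+C\,c_{v,t}\,ab,\qquad \la_2 b^2\le C\,c_{v,t}\,b^2+C\,c_{u,t}\,ab .
\]
Adding the two and using $2ab\le a^2+b^2$ gives $\bigl(\min\{\la_1,\la_2\}-C'\max\{c_{u,t},c_{v,t}\}\bigr)(a^2+b^2)\le 0$. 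Since $\max\{c_{u,t},c_{v,t}\}\to 0$, I choose $T\ge R_0$ large enough that $C'\max\{c_{u,t},c_{v,t}\}<\min\{\la_1,\la_2\}$ for all $t\ge T$; this forces $a=b=0$, hence $u_t^-=v_t^-=0$ and so $u_t,v_t\ge 0$ in $H_t$.

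The step I expect to be the main obstacle is the sign bookkeeping that lets the estimate close. One must confirm that the only terms left after discarding the favorable $w_u,w_v$ contributions are precisely the $w_{u,t}u^t$, $w_{v,t}u^t$ (and symmetric) terms, and that Lemma \ref{lm:4.2} controls exactly these by $c_{u,t},c_{v,t}$ times $L^2$-norms of the negative parts. Crucially, the coupling $\be$-terms mix the two equations, so this is a genuine $2\times 2$ system rather than two decoupled inequalities, and the smallness of $c_{u,t},c_{v,t}$ must be arranged to beat $\la_1$ and $\la_2$ \emph{simultaneously}; the symmetrization via $2ab\le a^2+b^2$ is what reduces this to a single scalar threshold on $t$.
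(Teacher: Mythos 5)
Your proposal is correct and follows essentially the same route as the paper's proof: both use \eqref{E:2.9} to make $w_u,w_v\le 0$ on $H_t$ for $t$ large, test \eqref{E:4.1} with the negative parts $u_t^-,v_t^-$, discard the favorable diagonal terms, control the remaining $w_{u,t}^-,w_{v,t}^-$ contributions via Cauchy--Schwarz, the $L^\infty$ bounds and Lemma \ref{lm:4.2}, and then conclude from $c_{u,t}+c_{v,t}\to 0$ (exponential decay). The only difference is cosmetic bookkeeping: the paper sums the two tested equations at once and closes the estimate as $\bigl(\|u_t^-\|_{L^2}^2+\|v_t^-\|_{L^2}^2\bigr)\le C(c_{u,t}+c_{v,t})\bigl(\|u_t^-\|_{L^2}^2+\|v_t^-\|_{L^2}^2\bigr)$, whereas you keep two coupled inequalities and symmetrize with $2ab\le a^2+b^2$.
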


\begin{proof}
From \eqref{E:2.9}, we can  choose $T_1>0$ such that $w_u, w_v \leq 0$ in $H_{t}$ for every $t>T_1$. By \eqref{E:4.1} and \eqref{E:4.3}, we have
\begin{align}\label{E:4.5}
\|u_t^-\|^2_{L^2(H_t)}&+\|v_t^-\|^2_{L^2(H_t)} \leq \|(u_t^-, v_t^-)\|^2_{H^1(H_t)\times H^1(H_t)} \nonumber\\
&=\int_{H_t}\left( \mu_1 w_{u,t}u^t u_t^- + \mu_1 w_{u}(u_t^-)^2 +\beta w_{v,t}u^t u_t^- + \beta w_{v}(u_t^-)^2 \right) \mathrm{d}x  \nonumber\\
&\quad +\int_{H_t}\left( \mu_2 w_{v,t}v^t v_t^- + \mu_2 w_{v}(v_t^-)^2 +\beta w_{u,t}v^t v_t^- + \beta w_{u}(v_t^-)^2 \right) \mathrm{d}x  \nonumber\\
&\leq \int_{H_t}\left( \mu_1 w_{u,t}^- u^t u_t^-  +\beta w_{v,t}^- u^t u_t^- + \mu_2 w_{v,t}^- v^t v_t^- +\beta w_{u,t}^- v^t v_t^-  \right) \mathrm{d}x \nonumber\\
&\leq C_1    \|w_{u,t}^-\|_{L^2(H_t)} \bigg( \|u^t\|_{L^{\infty}(H_t)}\|u_t^-\|_{L^2(H_t)} +\|v^t\|_{L^{\infty}(H_t)}\|v_t^-\|_{L^2(H_t)} \bigg) \nonumber\\
&\quad + C_1 \|w_{v,t}^-\|_{L^2(H_t)}  \bigg( \|u^t\|_{L^{\infty}(H_t)}\|u_t^-\|_{L^2(H_t)} +\|v^t\|_{L^{\infty}(H_t)}\|v_t^-\|_{L^2(H_t)} \bigg) \nonumber\\
&\leq  C_2 \bigg( c_{u,t} \|u_t^-\|_{L^2(H_t)}+ c_{v,t} \|v_t^-\|_{L^2(H_t)}\bigg) \bigg(  \|u_t^-\|_{L^2(H_t)} + \|v_t^-\|_{L^2(H_t)} \bigg) \nonumber\\
&\leq  C   \left( c_{u,t} + c_{v,t}  \right) \left( \|u_t^-\|^2_{L^2(H_t)} +\|v_t^-\|^2_{L^2(H_t)} \right).
\end{align}
By the definitions of $ c_{u,t}, c_{v,t}$ and the fact that  $u,v$ decay exponentially,  we find that
$$ \lim_{t\to \infty }(c_{u,t} +c_{v,t}) =0,$$
which, together with  \eqref{E:4.5}, implies that
 there exists $T>T_1$ such that, for any $t\geq T$,
$$u_t^- \equiv 0,\ v_t^- \equiv 0, ~~x\in H_t.$$
We complete the proof.
\end{proof}

\begin{lemma}\label{lm:4.4}
Assume that $\beta,\lambda_i,\mu_i > 0(i=1,2)$. Let $t\in\mathbb R$ and $u_t \geq 0, v_t \geq 0$ in $H_t$. Then

$(i)$:  $w_{u,t}\geq 0, w_{v,t}\geq 0$ in $H_t$;

$(ii)$:   If $u_t \not\equiv 0 $ or $v_t \not\equiv 0$, then we have that
\begin{align}\label{E:4.6}
u_t>0,\ v_t>0 \ \mbox{in} \ H_t  \ \  \mbox{and} \ \ \frac{\partial u }{\partial x_1 } < 0,\ \frac{\partial v }{\partial x_1 } < 0 \ \mbox{in} \ T_t.
\end{align}


\end{lemma}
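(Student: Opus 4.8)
The plan is to establish the two parts separately, using the integral representation of Lemma~\ref{lm:4.1} for part $(i)$ and a coupled strong maximum principle together with Hopf's lemma for part $(ii)$.

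For part $(i)$ I would start from the representation \eqref{E:4.2},
$$w_{u,t}(x)=\int_{H_t}\frac{1}{2\pi}\ln\left(\frac{|x-y^t|}{|x-y|}\right)\left(u^t(y)+u(y)\right)u_t(y)\,\mathrm{d}y,$$
and record the elementary geometric fact that $|x-y^t|\geq|x-y|$ whenever $x,y\in H_t$. Writing $a:=x_1-t>0$ and $b:=y_1-t>0$, one computes directly that $|x-y^t|^2-|x-y|^2=(a+b)^2-(a-b)^2=4ab\geq 0$, so the logarithmic kernel is nonnegative on $H_t\times H_t$. Since $u>0$ forces $u^t+u>0$ and the hypothesis gives $u_t\geq 0$, the integrand is nonnegative and hence $w_{u,t}\geq 0$ in $H_t$; the identical argument gives $w_{v,t}\geq 0$.

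For part $(ii)$ I would rewrite the first two equations of \eqref{E:4.1} in the form
$$-\Delta u_t+\big(\lambda_1-\mu_1 w_u-\beta w_v\big)u_t=\mu_1 w_{u,t}u^t+\beta w_{v,t}u^t,$$
and analogously for $v_t$, noting that by part $(i)$ and $u^t,v^t>0$ both right-hand sides are nonnegative. The zeroth-order coefficient $c(x):=\lambda_1-\mu_1 w_u-\beta w_v$ is not sign-definite, so I would not apply the maximum principle directly; instead I would absorb its negative part, writing $c=c^+-c^-$ with $c^+,c^-\geq 0$. Since $u_t\geq 0$, one has $-\Delta u_t+c^+u_t=\big(-\Delta u_t+c u_t\big)+c^- u_t\geq 0$, and now the operator has nonnegative zeroth-order term, so the standard strong maximum principle yields the dichotomy: on the connected set $H_t$ either $u_t\equiv 0$ or $u_t>0$, and likewise for $v_t$.

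Next I would use the coupling to show a single nontrivial component forces both to be positive. If $u_t\not\equiv 0$, the maximum principle gives $u_t>0$ in $H_t$; substituting back into \eqref{E:4.2} makes the kernel and integrand strictly positive, so $w_{u,t}>0$ in $H_t$, whence the right-hand side $\mu_2 w_{v,t}v^t+\beta w_{u,t}v^t$ of the $v_t$-equation is strictly positive, ruling out $v_t\equiv 0$ and giving $v_t>0$. The symmetric argument treats the case $v_t\not\equiv 0$, so in either case $u_t,v_t>0$ in $H_t$. Finally, since $u_t=u^t-u$ vanishes on $T_t=\partial H_t$ (there $x^t=x$), and $u_t>0$ inside with $u_t$ a supersolution of $-\Delta+c^+$, Hopf's lemma gives a negative exterior normal derivative along $T_t$; as the exterior normal to $H_t$ points in the $-x_1$ direction this reads $\partial u_t/\partial x_1>0$ on $T_t$. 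Differentiating $u^t(x)=u(2t-x_1,x_2)$ shows $\partial u^t/\partial x_1=-\partial u/\partial x_1$ on $T_t$, so $\partial u_t/\partial x_1=-2\,\partial u/\partial x_1$ and therefore $\partial u/\partial x_1<0$ on $T_t$, with the same conclusion for $v$. The main obstacle I anticipate is precisely the indefinite sign of $c(x)$, which the $c^+$-absorption step is designed to overcome; the $C^2$ regularity needed for Hopf's lemma is supplied by Lemma~\ref{lm:2.5}, and the interior ball condition holds trivially for the half-space $H_t$.
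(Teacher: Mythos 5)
Your proof is correct and takes essentially the same route as the paper: part $(i)$ from the strict positivity of the kernel $\ln\big(|x-y^t|/|x-y|\big)$ in the representation \eqref{E:4.2}, and part $(ii)$ from the strong maximum principle and Hopf's lemma applied to \eqref{E:4.1}, using the sign $u_t,v_t\geq 0$ to neutralize the indefinite zeroth-order coefficient and the coupling term $\beta w_{u,t}v^t>0$ to force positivity of both components. Your absorption of $c^-$ is just a cosmetic variant of the paper's splitting $w_u=w_u^++w_u^-$ and moving $\left(\mu_1 w_u^+ +\beta w_v^+\right)u_t$ to the right-hand side, so the two arguments are the same in substance.
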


\begin{proof} \
$(i)$: Since $\ln\left(\frac{|x-y^t|}{|x-y|}\right)>0$ for every $x,y\in H_t$, by Lemma \ref{lm:4.1}, we find that, for every $t\in\mathbb R$,
\begin{align}\label{E:4.7}
w_{u,t}, \ w_{v,t} \geq 0 \ \ \mbox{in} \ H_t.
\end{align}

$(ii)$: Without loss of generality, we  assume that $u_t \not\equiv 0$  in $H_t$.  Then  we get that $ w_{u,t} > 0$ in $H_t$  by \eqref{E:4.2}. So,  \eqref{E:4.1} and \eqref{E:4.7}
imply  that
$$-\Delta u_t + \left(\lambda_1 - \mu_1 w_u^- - \beta w_v^- \right) u_t
=\left( \mu_1 w_{u,t} +\beta w_{v,t} \right) u^t+ \left( \mu_1 w_u^+ + \beta w_v^+ \right) u_t >0, \ \ \
 \mbox{in}\ H_t, $$
and
$$-\Delta v_t+ \left( \lambda_2 - \mu_2 w_v^- - \beta w_u^- \right) v_t
=\left( \mu_2 w_{v,t} +\beta w_{u,t} \right) v^t +\left( \mu_2 w_v^+ + \beta w_u^+ \right) v_t> 0 ,\ \ \
 \mbox{in}\ H_t,$$
where  $h^+:=\max \{ h,0 \}$ for any $h\in X$. Hence $u_t>0,\ v_t>0$ in $H_t$ by the maximum principle, and
$$-2\frac{\partial u }{\partial x_1 }=\frac{\partial u_t }{\partial x_1 } > 0,\ \ \ -2\frac{\partial v }{\partial x_1 }=\frac{\partial v_t }{\partial x_1 } > 0 \ \ \mbox{in} \ T_t ,$$
by the Hopf Lemma.

We complete the proof.
\end{proof}

\begin{lemma}\label{lm:4.5}
Assume that $\beta,\lambda_i,\mu_i > 0(i=1,2)$.
Let $u_{t}(x), v_{t}(x) \geq 0$ in $H_{t}$, but $u_{t}\not\equiv0$ or $v_{t}\not\equiv0$ in $H_{t}$.
Then there exists $\varepsilon > 0$ such that, for any $\tau\in(t-\varepsilon, t]$, $u_\tau \geq 0, v_\tau \geq 0$ in $H_\tau$.
\end{lemma}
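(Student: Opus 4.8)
The plan is to exploit the strict interior positivity furnished by Lemma \ref{lm:4.4} together with the energy estimate already used in Lemma \ref{lm:4.3}, and to push the plane slightly to the left by a measure-smallness argument. Since the hypotheses here are exactly those of Lemma \ref{lm:4.4}, I would first record that $u_t>0$ and $v_t>0$ throughout $H_t$, and that $u_\tau^-,v_\tau^-\in H^1(H_\tau)$ vanish on $T_\tau$ (because $x^\tau=x$ there), so that they are admissible test functions for \eqref{E:4.1}. Writing $D_\tau:=\|(u_\tau^-,v_\tau^-)\|^2_{H^1(H_\tau)\times H^1(H_\tau)}$ and $\Sigma_\tau:=\{x\in H_\tau: u_\tau(x)<0 \text{ or } v_\tau(x)<0\}$, the goal is to show $D_\tau\le o(1)D_\tau$ as $\tau\to t^-$, which forces $D_\tau=0$ and hence $u_\tau,v_\tau\ge0$.

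Second, I would localize the negative set. By \eqref{E:2.9} fix $R>0$ so that $w_u,w_v\le0$ on $H_\tau\setminus B_R$ for all $\tau$ near $t$, and so that the exponential decay \eqref{E:2.10} makes the tail contributions to $c_{u,\tau},c_{v,\tau}$ (see \eqref{E:4.4}) arbitrarily small. For a fixed small $\eta>0$ the set $K:=\overline{B_R}\cap\{x_1\ge t+\eta\}$ is a compact subset of $H_t$, so Lemma \ref{lm:4.4} gives $\min_K(u_t\wedge v_t)>0$; since $u,v\in C^2(\mathbb R^2)$, the reflections $u_\tau,v_\tau$ converge to $u_t,v_t$ uniformly on $K$ as $\tau\to t^-$, whence there is $\varepsilon>0$ with $\Sigma_\tau\cap K=\emptyset$ for all $\tau\in(t-\varepsilon,t]$. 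Consequently $\Sigma_\tau\cap B_R$ is trapped in the thin strip $\{t-\varepsilon<x_1<t+\eta\}\cap B_R$, so both $|\Sigma_\tau\cap B_R|$ and $c_{u,\tau}+c_{v,\tau}$ can be made as small as desired by first choosing $R$ large, then $\eta$ and $\varepsilon$ small.

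Third, I would run the energy estimate exactly as in \eqref{E:4.5}. Testing \eqref{E:4.1} with $u_\tau^-,v_\tau^-$ and summing, the convolution-source terms are controlled by Lemma \ref{lm:4.2} and $u^\tau u_\tau^-\le0$, yielding a bound $C(c_{u,\tau}+c_{v,\tau})D_\tau$. The genuinely new terms are $\int_{H_\tau}(\mu_1 w_u+\beta w_v)(u_\tau^-)^2$ and $\int_{H_\tau}(\mu_2 w_v+\beta w_u)(v_\tau^-)^2$, in which $w_u,w_v$ have no global sign. I would split each at $\partial B_R$: outside $B_R$ the integrand is $\le0$ by the choice of $R$, while inside $B_R$ the factor $u_\tau^-$ is supported in $\Sigma_\tau\cap B_R$, so by Hölder and the Sobolev embedding $H^1(\mathbb R^2)\hookrightarrow L^q(\mathbb R^2)$ $(q>2)$ this part is bounded by $C_R|\Sigma_\tau\cap B_R|^{1-2/q}D_\tau=o(1)D_\tau$. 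Combining gives $D_\tau\le\big(C(c_{u,\tau}+c_{v,\tau})+o(1)\big)D_\tau$, so shrinking $\varepsilon$ makes the bracket $<1$ and forces $u_\tau^-\equiv v_\tau^-\equiv0$ in $H_\tau$ for every $\tau\in(t-\varepsilon,t]$.

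The main obstacle is precisely the absence of a sign for the potentials $w_u,w_v$ on a bounded region: this is what prevents a verbatim repetition of Lemma \ref{lm:4.3}, where $\tau$ was taken so large that $w_u,w_v\le0$ on all of $H_\tau$. The resolution — splitting off $B_R$ and absorbing the bounded-region term through the small measure of $\Sigma_\tau\cap B_R$ and the Sobolev inequality — hinges on the strict interior positivity of Lemma \ref{lm:4.4}, which is the only mechanism guaranteeing that the negativity set $\Sigma_\tau$ collapses into a thin strip as $\tau\uparrow t$.
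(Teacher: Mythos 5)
Your proof is correct, and its outer scaffolding --- fixing $R$ via \eqref{E:2.9} and the exponential decay so that $w_u,w_v\le 0$ off $B_R$ and the tails of $c_{u,\tau},c_{v,\tau}$ are small, then testing \eqref{E:4.1} with $(u_\tau^-,v_\tau^-)$ and absorbing the reflected-potential terms via Lemma \ref{lm:4.2} --- is exactly the paper's. The genuine difference is how the dangerous region $H_\tau\cap B_R$, where $w_u,w_v$ have no sign, is neutralized. The paper proves the stronger fact \eqref{E:4.10}: $u_\tau,v_\tau>0$ on \emph{all} of $H_\tau\cap B_R$ for $\tau$ close to $t$, and for this it needs both conclusions of Lemma \ref{lm:4.4}$(ii)$: interior positivity handles points at fixed distance from the plane, while the Hopf-type inequality $\partial u/\partial x_1<0$, $\partial v/\partial x_1<0$ on $T_t$ (together with $u,v\in C^2$) is what rules out negativity at points of $B_R$ close to $T_\tau$. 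Consequently $u_\tau^-=v_\tau^-=0$ in $H_\tau\cap B_R$ by \eqref{E:4.11}, every indefinite term vanishes identically, and \eqref{E:4.12} reduces verbatim to the estimate of Lemma \ref{lm:4.3}. You instead tolerate a possible negativity set inside $B_R$, confined by interior positivity and uniform convergence alone to the thin strip $\{t-\varepsilon<x_1<t+\eta\}\cap B_R$, and you absorb the resulting indefinite-potential integrals through H\"older and the Sobolev embedding, using that $|\Sigma_\tau\cap B_R|^{1-2/q}$ is small. Your route never invokes the boundary derivative information, so it is more robust (it would survive in settings where no Hopf lemma is available), and it honestly confronts the near-plane ``corner'' issue that the paper's one-line justification of \eqref{E:4.10} leaves implicit; the price is extra bookkeeping, which you handle in the correct order: $R$ must be fixed before $\eta,\varepsilon$, since your constant $C_R$ involves $\|w_u\|_{L^\infty(B_R)}+\|w_v\|_{L^\infty(B_R)}$, which grows like $\ln R$, and the Sobolev constant on $H_\tau$ is uniform in $\tau$ because $u_\tau^-$ vanishes on $T_\tau$ and so extends by zero to $H^1(\mathbb R^2)$. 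Two cosmetic points: as in the paper's \eqref{E:4.5} and \eqref{E:4.12}, the coercivity step should carry the factor $\min\{1,\lambda_1,\lambda_2\}$ (or use the weighted $H^1$ norm), and the smallness of $c_{u,\tau}+c_{v,\tau}$ should be stated uniformly for $\tau\in(t-\varepsilon,t]$ --- which it is, since the strip contributes at most $(\varepsilon+\eta)^2\big(\|u\|_2^2+\|v\|_2^2\big)$ to $c_{u,\tau}^2+c_{v,\tau}^2$.
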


\begin{proof}
Let $B_R:=B_R (0) $ for $R>0$.
By \eqref{E:2.9} and  the fact that $u,v$ decay exponentially, we may choose $R>1$ large enough such that, for every $\tau\in\mathbb R$,
\begin{align}\label{E:4.8}
w_u \leq 0,\ w_v \leq 0~~~ \mbox{in}~ H_\tau \backslash B_R,
\end{align}
and  for any $\tau\in[t-1,t]$,
\begin{align}\label{E:4.9}
\left( \int_{\mathbb R^2\backslash B_R} (y_1-\tau)^2 u^2(y) \mathrm{d}y \right)^\frac{1}{2} + \left( \int_{\mathbb R^2\backslash B_R} (y_1-\tau)^2 v^2(y) \mathrm{d}y \right)^\frac{1}{2} < \frac{1}{2 C_0},
\end{align}
where $C_0 : = k (\mu_1+\mu_2+2\beta)(\|u^\tau\|_{L^{\infty}(H_\tau)}+\|v^\tau\|_{L^{\infty}(H_\tau)})$ and $k$ is defined    in Lemma \ref{lm:4.2}.
By Lemma \ref{lm:4.4}$(ii)$ and $u,v\in  C ^{2} (\mathbb R^{2})$,  there exists $0< \varepsilon <1 $ such that, for any $\tau\in(t-\varepsilon, t]$,
\begin{align}\label{E:4.10}
u_\tau > 0, \ v_\tau >0 \ \  \mbox{in} \ H_\tau \cap B_R,
\end{align}
which means that
\begin{align}\label{E:4.11}
u^-_\tau = 0, \ v^-_\tau =0 \ \  \mbox{in} \ H_\tau \cap B_R.
\end{align}
Using  \eqref{E:4.1}, \eqref{E:4.3}, \eqref{E:4.8} and \eqref{E:4.11}, we can get that
\begin{align}\label{E:4.12}
\|u_\tau^-\|^2_{L^2(H_\tau)}&+\|v_\tau^-\|^2_{L^2(H_\tau)} \leq \|(u_\tau^-, v_\tau^-)\|^2_{H^1(H_\tau)\times H^1(H_\tau)} \nonumber\\
&=\int_{H_\tau \backslash B_R}\left( \mu_1 w_{u,\tau}u^\tau u_\tau^- + \mu_1 w_{u}(u_\tau^-)^2 +\beta w_{v,\tau}u^\tau u_\tau^- + \beta w_{v}(u_\tau^-)^2 \right) \mathrm{d}x \nonumber\\
&\quad + \int_{H_\tau \backslash B_R}\left( \mu_2 w_{v,\tau}v^\tau v_\tau^- + \mu_2 w_{v}(v_\tau^-)^2 +\beta w_{u,\tau}v^\tau v_\tau^- + \beta w_{u}(v_\tau^-)^2 \right) \mathrm{d}x \nonumber\\
&\leq \int_{H_\tau \backslash B_R}\left( \mu_1 w_{u,\tau}^- u^\tau u_\tau^-  +\beta w_{v,\tau}^- u^\tau u_\tau^- + \mu_2 w_{v,\tau}^- v^\tau v_\tau^- +\beta w_{u,\tau}^- v^\tau v_\tau^-  \right) \mathrm{d}x \nonumber\\
&\leq  \|w_{u,\tau}^-\|_{L^2(H_\tau)} \bigg( \mu_1 \|u^\tau\|_{L^{\infty}(H_\tau)}\|u_\tau^-\|_{L^2(H_\tau)} +\beta \|v^\tau\|_{L^{\infty}(H_\tau)}\|v_\tau^-\|_{L^2(H_\tau)} \bigg) \nonumber\\
&\quad +  \|w_{v,\tau}^-\|_{L^2(H_\tau)}  \bigg( \beta \|u^\tau\|_{L^{\infty}(H_\tau)}\|u_\tau^-\|_{L^2(H_\tau)} +\mu_2 \|v^\tau\|_{L^{\infty}(H_\tau)}\|v_\tau^-\|_{L^2(H_\tau)} \bigg) \nonumber\\
&\leq    k c_{u,\tau} \|u_\tau^-\|_{L^2(H_\tau)} \bigg( \mu_1 \|u^\tau\|_{L^{\infty}(H_\tau)}\|u_\tau^-\|_{L^2(H_\tau)} + \beta \|v^\tau\|_{L^{\infty}(H_\tau)}\|v_\tau^-\|_{L^2(H_\tau)} \bigg) \nonumber\\
&\quad +  k c_{v,\tau} \|v_\tau^-\|_{L^2(H_\tau)} \bigg( \beta \|u^\tau\|_{L^{\infty}(H_\tau)}\|u_\tau^-\|_{L^2(H_\tau)} + \mu_2 \|v^\tau\|_{L^{\infty}(H_\tau)}\|v_\tau^-\|_{L^2(H_\tau)} \bigg) \nonumber\\
&\leq k    \bigg(  \mu_1 c_{u,\tau}\|u^\tau\|_{L^{\infty}(H_\tau)} + \beta c_{u,\tau}\|v^\tau\|_{L^{\infty}(H_\tau)} + \beta c_{v,\tau} \|u^\tau\|_{L^{\infty}(H_\tau)}  \bigg) \|u_\tau^-\|^2_{L^2(H_\tau)} \nonumber\\
&\quad +  k \bigg(  \beta  c_{u,\tau}\|v^\tau\|_{L^{\infty}(H_\tau)} + \beta  c_{v,\tau} \|u^\tau\|_{L^{\infty}(H_\tau)}+ \mu_2 c_{v,\tau}
\|v^\tau\|_{L^{\infty}(H_\tau)}  \bigg) \|v_\tau^-\|^2_{L^2(H_\tau)} \nonumber\\
&\leq  C_0  \left( c_{u,\tau} + c_{v,\tau}  \right) \left( \|u_\tau^-\|^2_{L^2(H_\tau)} +\|v_\tau^-\|^2_{L^2(H_\tau)} \right).
\end{align}
Using  \eqref{E:4.10} and the definition of $H_\tau^w$,  we have $H_\tau^u ,H_\tau^v \subset \mathbb R^2\backslash B_R$, which, combining  \eqref{E:4.9} and the definition of $c_{w, \tau}$, means that
$$c_{u,\tau}+c_{v,\tau} \leq \left( \int_{\mathbb R^2\backslash B_R} (y_1-\tau)^2 u^2(y) \mathrm{d}y \right)^\frac{1}{2} + \left( \int_{\mathbb R^2\backslash B_R} (y_1-\tau)^2 v^2(y) \mathrm{d}y \right)^\frac{1}{2} < \frac{1}{2 C_0}.$$
So $\| u_\tau^- \|^2_{L^2(H_\tau)} +\| v_\tau^- \|^2_{L^2(H_\tau)} =0 $, which implies that $u_\tau^- \equiv 0,\ v_\tau^- \equiv 0 $ in $H_\tau$ for any $\tau\in(t-\varepsilon, t]$. We complete the proof.
\end{proof}

\begin{proof}[\textbf{Proof of Theorem \ref{th:1.1}:}]   From Lemma \ref{lm:4.3}, there exists $T>0$ such that, for any $t>T$,
\begin{align}\label{E:4.13}
u_t(x) \geq 0, \  v_t(x) \geq 0  \ \ \mbox{in} \ H_t.
\end{align}
Starting from such a $t>T$, one can move the plane $x_1=t$ to the left as long as \eqref{E:4.13} holds.
Suppose that there exists a $t_0>0$ such that $u_{t_0}(x), v_{t_0}(x) \geq 0$ in $H_{t_0}$,
but $u_{t_0}\not\equiv0$ or $v_{t_0}\not\equiv0$ in $H_{t_0}$.
By Lemma \ref{lm:4.5},  there exists a $\varepsilon > 0$ such that,  for any $\tau\in(t_{0} -\varepsilon, t_{0}]$,
$$ u_\tau (x) \geq 0,\ v_\tau (x) \geq 0 \ \ \mbox{in} \ H_\tau .$$
Using Lemma \ref{lm:4.4}$(ii)$, we  have $u_\tau \equiv 0 \ \mbox{in} \ H_\tau$ if and only if $v_\tau \equiv 0$ in $H_\tau$. So we obtain that if the process of moving plane stops at $t_1 $,
then  $u_{t_1} \equiv 0, v_{t_1} \equiv 0$ in $H_{t_1}$  and $u_{t}\geq 0, v_{t}\geq 0$ in $H_t$ for any $t\geq t_1$.

By a translation, we may assume that $u(0)=\max\limits_{x\in\mathbb R^2} u(x)$ and $v(0)=\max\limits_{x\in\mathbb R^2} v(x)$.
Therefore,  the process of moving plane from any direction must stop at the origin.
So $u$ and $v$ are  radially symmetric and monotone decreasing.

We complete the proof.
\end{proof}

\section*{Acknowledgments}
We thank to the editor and  the referees for their time and comments.
This work is partially supported by the Natural Science Foundation of China (Grant No. 12061012)
and the special foundation for Guangxi Ba Gui Scholars


\end{document}